\theoremstyle{plain}
   \newtheorem{theorem}{Theorem}[section]
   \newtheorem{proposition}[theorem]{Proposition}
   \newtheorem{lemma}[theorem]{Lemma}
   \newtheorem{corollary}[theorem]{Corollary}
   \newtheorem{conjecture}[theorem]{Conjecture}
   \newtheorem*{theorem*}{Theorem}
\theoremstyle{definition}
   \newtheorem{definition}[theorem]{Definition}
   \newtheorem{example}[theorem]{Example}
   \newtheorem{question}[theorem]{Question}
   \newtheorem{remark}[theorem]{Remark}
\numberwithin{equation}{section}
\newcommand\Symm{\mathfrak{S}}
\newcommand\xx{{\mathbf{x}}}
\newcommand{\boldt}{{\bf t}}
\newcommand{\boldalpha}{{\boldsymbol{\alpha}}}
\newcommand\wt{\operatorname{wt}}
\newcommand{\defn}[1]{{\it {#1}}}
\newcommand{\type}[1]{\mathrm{#1}}
\newcommand\CCC{{\mathbf{C}}}
\newcommand\ccc{{\mathbf{c}}}
\newcommand\CC{{\mathbb{C}}}
\newcommand\ZZ{{\mathbb{Z}}}
\newcommand\NN{{\mathbb{N}}}
\newcommand\QQ{{\mathbb{Q}}}
\newcommand\RR{{\mathbb{R}}}
\newcommand\ooo{{\mathfrak{o}}}
\newcommand{\TBD}[1]{\textbf{\color{red} #1}}
\title{Circuits and Hurwitz action in finite root systems}
\author{Joel Brewster Lewis}
\address{J.\ B. Lewis and V. Reiner, School of Mathematics, University of Minnesota, Minneapolis, MN 55455, USA}
\author{Victor Reiner}
\keywords{root system, reflection group, factorization, 
Hurwitz action, Coxeter element, reflection, acuteness, 
Gram matrix, circuit, matroid}
\subjclass{20F55, 51F15, 05Exx}
\date{\today}
\begin{document}

\begin{abstract}
In a finite real reflection group, two factorizations of a Coxeter element 
into an arbitrary number of reflections are shown to lie in the same 
orbit under the Hurwitz 
action if and only if they use the same multiset of conjugacy classes.  
The proof makes use of a surprising lemma, derived from a classification of
the minimal linear dependences (matroid circuits) in finite root systems:
any set of roots forming a minimal linear dependence with
positive coefficients has a disconnected graph of 
pairwise acuteness.

\end{abstract}

\maketitle


\section{Introduction}
\label{intro section}

Given a group $W$ and set $T$ of generators for $W$, consider factorizations $(t_1, t_2, \ldots, t_m)$ of a given element $g = t_1\cdots t_m$ in $W$.  When $T$ is closed under conjugation, these factorizations carry a natural action of the Artin braid group on $m$ strands called the \defn{Hurwitz action}.  Here the braid group generator $\sigma_i$ acts on ordered factorizations by a \defn{Hurwitz move}, interchanging two factors $t_i, t_{i+1}$
while conjugating one by the other:
\begin{equation}
\label{Hurwitz-move-definition}
\begin{array}{rccll}
(t_1,\ldots,t_{i-1},& t_i,&t_{i+1},& t_{i+2},\ldots,t_m) &\overset{\sigma_i}{\longmapsto} \\
(t_1,\ldots,t_{i-1},& t_{i+1},& t_i^{t_{i+1}},& t_{i+2},\ldots,t_m).
\end{array}
\end{equation}
(We use the notation $a^b := b^{-1}ab$ for conjugation in a group.)
When $W$ is a finite real reflection group of rank $n$ and $T$ is the
set of all of its reflections, 
D. Bessis used a simple inductive argument to prove the following result
about shortest factorizations of {\it Coxeter elements} 
(see Section~\ref{main theorem section}
for the definition), 
which he called the \emph{dual Matsumoto property}.

\vskip.1in
\noindent
{\bf Bessis's Theorem} (\cite[Prop.~1.6.1]{Bessis}){\bf.}
{\it 
Let $W$ be a finite real reflection group of rank $n$ and 
let $c$ be a Coxeter element of $W$.  
The set of all shortest ordered factorizations $(t_1,\ldots,t_n)$ of
$c=t_1 t_2 \cdots t_n$ as a product of reflections forms
a single transitive orbit under the Hurwitz action.  
}

\vskip.1in
The original context for this result is the \emph{dual Coxeter theory} developed by Bessis~\cite{Bessis} and Brady and Watt~\cite{Brady, BradyWattKPi1}.  
It has since been extended to several other contexts:
\begin{compactitem}
\item shortest reflection factorizations in \defn{well-generated 
complex reflection groups} \cite[Prop.~7.6]{BessisKpi1}, 
\item shortest \defn{primitive} factorizations in well-generated 
complex reflection groups \cite[Thm.~0.4]{Ripoll},
where primitivity means having at most one nonreflection factor,
\item shortest reflection factorizations in not-necessarily-finite 
Coxeter groups~\cite{BDSW}, and 
\item the classification in finite real reflection groups of
the elements whose shortest reflection factorizations
have a single Hurwitz orbit \cite{BGRW}.  
\end{compactitem}

However, the question of how Bessis's Theorem extends to \emph{longer} 
reflection factorizations seems not to have been addressed.  One obstruction
to transitivity has been noted frequently \cite{LandoZvonkin, LRS, Ripoll}:
the Hurwitz action preserves the 
(unordered) $m$-element multiset of conjugacy classes 
of the factors.  This multiset is 
called the \defn{unordered passport} in type $\type{A}$ by Lando and Zvonkin \cite[\S 5.4.2.2]{LandoZvonkin}.
In considering reflection factorizations of a Coxeter element $c$ 
whose length is strictly greater than the minimum (the rank $n$ of $W$), 
it is possible for the factorizations to use 
different multisets of reflection conjugacy classes.
When $W$ is a finite real reflection group, we show that this
is the only obstruction.

\begin{theorem}
\label{main theorem}
In a finite real reflection group, two reflection factorizations 
of a Coxeter element lie in the same Hurwitz orbit 
if and only if they share the same multiset of conjugacy classes.
\end{theorem}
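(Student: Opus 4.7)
The ``only if'' direction is immediate from~(\ref{Hurwitz-move-definition}): each Hurwitz move $\sigma_i$ replaces $t_i$ by the conjugate $t_i^{t_{i+1}}$ while fixing the remaining factors, so the multiset of reflection conjugacy classes is an invariant of the Hurwitz orbit.

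For the ``if'' direction, I would proceed by induction on the common length $m$ of the factorizations. Parity considerations force $m \equiv n \pmod 2$ (since any shortest reflection factorization of $c$ has length $n$ by Bessis's Theorem, hence $\det c = (-1)^n$, while $\det(t_1 \cdots t_m) = (-1)^m$), so $m - n$ is a nonnegative even integer and the induction decreases $m$ in steps of $2$, with the base case $m = n$ handled directly by Bessis's Theorem. The inductive step would rest on a reduction lemma: any length-$m$ reflection factorization of $c$ with $m > n$ is Hurwitz-equivalent to one of the form $(u_1, \ldots, u_{m-2}, r, r)$ for some reflection $r$, with the conjugacy class of $r$ choosable among those appearing with multiplicity at least two in the multiset of the factorization. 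Granted this lemma, one reduces both length-$m$ factorizations (with shared multiset $M$) to trailing-duplicate form with the same prescribed class $T$, observes that the resulting truncations are length-$(m-2)$ factorizations of $c$ sharing the reduced multiset $M \setminus \{T, T\}$ and so are Hurwitz-equivalent by induction, and then lifts this back to a Hurwitz-equivalence of the originals by matching the two trailing duplicate pairs using moves that pass reflections from neighboring positions through them, exploiting the $W$-conjugacy of the two chosen representatives of $T$.

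The crux of the argument is proving the reduction lemma, and this is where the \textbf{surprising lemma} on circuits of root systems --- disconnectedness of the acuteness graph on any minimal positive linear dependence --- should enter decisively. The starting point is that $m > n$ forces the roots $\alpha_1, \ldots, \alpha_m$ of $t_1, \ldots, t_m$ to satisfy a nontrivial linear dependence; restrict to a minimal such dependent subset, a circuit in the linear matroid of the root system. By combining Hurwitz moves (which conjugate individual factors, thereby replacing their roots by $W$-conjugates) with sign changes on roots (which do not alter the corresponding reflections), arrange this minimal dependence to have strictly positive coefficients. The surprising lemma then produces a nontrivial partition of the circuit into $A \sqcup B$ with every root in $A$ non-acute to every root in $B$; in the sharpest case of orthogonality the corresponding reflections commute, and the plan is to exploit these non-acuteness and commutation relations through further Hurwitz moves to collide two reflections from the circuit into an adjacent equal pair. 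I expect the most delicate point to be controlling the conjugacy class of the resulting duplicate --- so that it may be chosen to match a prescribed class $T$ from the multiset --- which seems to require simultaneous care in the choice of minimal dependence and in the ordering of Hurwitz moves used to produce the collision.
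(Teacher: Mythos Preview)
Your overall architecture---induction on $m$ with base case $m=n$ via Bessis's Theorem, and a reduction lemma producing an adjacent duplicate pair---matches the paper's. But there are two genuine gaps in the proposal, and in each case the paper's actual mechanism differs from what you sketch.

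\textbf{How the acuteness lemma is really used.} You describe the plan as using the partition $A \sqcup B$ to exploit ``non-acuteness and commutation relations'' to ``collide two reflections into an adjacent equal pair.'' The paper does not argue this way, and the emphasis on orthogonality/commutation is a red herring. The paper lifts the Hurwitz action to \emph{$m$-dependences} $(\CCC,\ccc)$ (tuples of roots together with dependence coefficients) and defines a weight $\wt(\ccc)=\sum |c_i|$. The key computation (Proposition~\ref{proposition: obtuse angles are good}) is that if $\langle c_i\alpha_i,c_{i+1}\alpha_{i+1}\rangle<0$ then one of $\sigma_i^{\pm 1}$ strictly \emph{decreases} the weight. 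The disconnectedness of $\Gamma_C$ is used only to guarantee that, after harmless reorderings by orthogonal swaps, some adjacent pair is strictly obtuse, so a weight-decreasing move exists. Iterating, and using a separate argument that the weight cannot decrease forever (a $\gcd$ argument over $\ZZ$ or $\ZZ[(1+\sqrt 5)/2]$), one eventually forces a coefficient $c_j$ to vanish; this means a proper subtuple is already dependent, and iterating Lemma~\ref{main lemma} yields the standard form of Corollary~\ref{cor:standard form}. So the duplicate pair is not produced by a direct ``collision'' but as the terminal output of a monotone weight-decreasing process.

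\textbf{Controlling the conjugacy class of the duplicate.} You correctly flag this as the delicate point, but your suggested fix---careful choice of minimal dependence and ordering of moves---is not how it is done, and I would not expect that route to succeed. The paper makes \emph{no} attempt to control the class during the reduction. Instead, after both factorizations are in the standard form of Corollary~\ref{cor:standard form} (pairs of duplicates followed by a shortest factorization $\hat\boldt$ of $c$), the paper invokes Bessis's Lemma~\ref{reflections-are-noncrossing}: \emph{every} reflection $t$ can begin some shortest factorization of $c$. Combined with Bessis's transitivity theorem, this lets one bring any chosen reflection $r_1$ to the front of $\hat\boldt$ via Hurwitz moves, then conjugate the adjacent duplicate pair $(t_i,t_i)$ through $r_1$ to get $(t_i^{r_1},t_i^{r_1})$, and iterate over a word $r_1\cdots r_k=w$ to convert $(t_i,t_i)$ into $(t_i^w,t_i^w)$ for \emph{arbitrary} $w\in W$. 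This is what gives full control over the conjugacy class of the duplicate after the fact; it is the essential extra ingredient beyond the acuteness lemma, and your proposal omits it.
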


\noindent
In particular, in the irreducible ``oddly-laced types'' 
($\type{A}_n, \type{D}_n, \type{E}_6, \type{E}_7, \type{E}_8,
\type{H}_3, \type{H}_4,$ and $\type{I}_2(m)$ with $m$ odd),
there is only one conjugacy class of reflections, and hence
the Hurwitz action is transitive.

We sketch here the proof of Theorem~\ref{main theorem}, 
which has three main steps.
The first is a lemma, proven in Section~\ref{section: acuteness},
that one might paraphrase as asserting that
``root circuits are acutely disconnected''.
Call a subset $C=\{\alpha_1,\ldots,\alpha_m\}$ of
a Euclidean space $(V,\langle \cdot, \cdot \rangle)$
a \defn{minimal dependence} (or \defn{circuit}) 
if there exist nonzero coefficients $c_i$ 
such that $\sum_{i=1}^m c_i \alpha_i=0$,
and $C$ is inclusion-minimal with respect to this property.
Define its \defn{acuteness graph} $\Gamma_C$ to have vertices
$\{1,2,\ldots,m\}$ and an edge $\{i,j\}$ whenever 
$\langle c_i \alpha_i, c_j \alpha_j\rangle>0$.
 
\begin{lemma}
\label{acuteness-lemma}
In a finite not-necessarily-crystallographic root system,
every circuit $C$ has $\Gamma_C$ disconnected.
\end{lemma}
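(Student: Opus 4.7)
The plan proceeds in three stages. First, I would normalize the setup. Since a root system is closed under negation, we may replace any $\alpha_i$ by $-\alpha_i$ (and correspondingly $c_i$ by $-c_i$) without changing the underlying set of roots or the graph $\Gamma_C$, so we may assume $c_i>0$ for every $i$. Writing $\beta_i:=c_i\alpha_i$, the dependence becomes $\sum\beta_i=0$, and $\{i,j\}$ is an edge of $\Gamma_C$ precisely when $\langle\beta_i,\beta_j\rangle>0$. Further, since distinct irreducible components of a root system are mutually orthogonal, a minimal linear dependence cannot split across two components; thus we may assume the ambient root system is irreducible.

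Second, I would record a general constraint: taking the inner product of $\sum\beta_i=0$ with a fixed $\beta_k$ yields $|\beta_k|^2=-\sum_{i\neq k}\langle\beta_i,\beta_k\rangle$, so each row of the Gram matrix $\bigl(\langle\beta_i,\beta_j\rangle\bigr)_{i,j}$ has off-diagonal entries summing to a strictly negative number, forcing every vertex of $\Gamma_C$ to have at least one non-neighbor. This already resolves the rank-two root systems $\type{I}_2(m)$ (which include all non-crystallographic planar cases): a positive dependence among three planar vectors cannot have all three pairwise angles acute, since then the three vectors would lie in a common open half-plane, contradicting $\sum\beta_i=0$; thus $\Gamma_C$ has at most one edge on at most three vertices and is automatically disconnected.

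Third, for each remaining irreducible finite root system I would classify the circuits up to the Weyl or Coxeter group action and verify disconnectedness of $\Gamma_C$ on each orbit representative. In type $\type{A}_n$, identifying the root $e_i-e_j$ with the edge $\{i,j\}$ of $K_{n+1}$ shows that circuits correspond to cycles; after orienting around the cycle, consecutive edges have Gram entry $-1$ and non-consecutive ones are orthogonal, so $\Gamma_C$ is edgeless. In types $\type{B}_n,\type{C}_n,\type{D}_n$, one uses the coordinate description of the roots to enumerate a short list of Weyl orbits of circuits, checking each by hand. For the exceptional types $\type{E}_6,\type{E}_7,\type{E}_8,\type{F}_4,\type{G}_2,\type{H}_3,\type{H}_4$, the Weyl/Coxeter orbits of circuits are finite in number and can be enumerated (in principle by computer), with disconnectedness then verified by direct inner-product computation for each representative.

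The main obstacle is the combinatorial classification of circuits in types $\type{D}_n,\type{B}_n,\type{C}_n$ and in the exceptional types, where several distinct shapes of circuits arise even after accounting for the Weyl group action. Once a list of normal forms is in hand, however, disconnectedness of $\Gamma_C$ in each case follows from a direct Gram matrix computation—often trivially, because $\Gamma_C$ turns out to be edgeless, and in the remaining cases because the edges that do appear leave at least one vertex isolated.
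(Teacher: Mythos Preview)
Your plan is essentially the paper's own proof: reduce to full circuits in irreducible root systems, classify these up to the $W$-action (the paper uses Zaslavsky's signed-graph framework for types $\type{B}/\type{C}/\type{D}$ and a computer enumeration for the exceptionals), and verify disconnectedness of $\Gamma_C$ on each orbit representative. Two small corrections: $\type{G}_2=\type{I}_2(6)$ is rank two and already covered by your second stage, and in $\type{H}_4$ not every circuit has an isolated vertex in $\Gamma_C$---some decompose as a disjoint triangle and edge---so your final sentence slightly overstates what the verification will show.
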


\noindent
The second step (Section~\ref{main lemma section}) uses 
Lemma~\ref{acuteness-lemma} to prove a lemma on  
the \defn{absolute (reflection) length} function
$$
\ell_T(w):=\min\{\ell: w = t_1 t_2 \cdots t_\ell \text{ for some }t_i \in T \}.
$$

\begin{lemma}
\label{main lemma}
For any reflection factorization $\boldt = (t_1, \ldots, t_m)$ of
$w = t_1 \cdots t_m$ with $\ell_T(w) < m$,
either $m = 2$, or there exists $\boldt' = (t'_1, \ldots, t'_m)$ 
in the Hurwitz orbit of $\boldt$ with 
$\ell_T(t'_1 \cdots t'_k) < k$ for some $k \leq m-1$.
\end{lemma}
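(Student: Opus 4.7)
The plan is to convert the reflection length hypothesis into a statement about linear dependence of roots, and then use Hurwitz moves to push such a dependence into an initial segment. Writing $\alpha_i$ for the (sign-ambiguous) root of $t_i$, the standard identity $\ell_T(t_1\cdots t_k) = \dim\Span(\alpha_1,\ldots,\alpha_k)$ converts $\ell_T(w)<m$ into linear dependence of $\alpha_1,\ldots,\alpha_m$, and the desired conclusion into linear dependence of $\alpha'_1,\ldots,\alpha'_k$ for some $k\le m-1$ in a Hurwitz-equivalent sequence. A Hurwitz move at positions $(i,i+1)$ sends the root pair $(\alpha,\beta)$ to $(\beta, s_\beta(\alpha))$, preserving the span of the full list while possibly changing the underlying (multi)set of roots.

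Initial split: if some proper prefix $\alpha_1,\ldots,\alpha_k$ with $k\le m-1$ is already linearly dependent, take $\boldt'=\boldt$ and we are done. Otherwise $\alpha_1,\ldots,\alpha_{m-1}$ are independent, so there is a unique-up-to-scalar dependence $\sum_{i=1}^m c_i\alpha_i = 0$ whose support is a circuit $C\ni\alpha_m$; set $k = |C|$.

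Assume first $k<m$, so that some $c_i=0$. I push the $k$ circuit positions to the front. The key move is this: whenever position $p$ holds a non-circuit root $\gamma$ and position $p+1$ holds a circuit root $\alpha$, the Hurwitz move $\sigma_p$ sends $(\gamma,\alpha)\mapsto(\alpha, s_\alpha(\gamma))$, advancing the circuit root one slot while merely replacing $\gamma$ by its reflection. Neither the set $C$ nor the coefficients $(c_i)$ on it are disturbed, so iterating brings $C$ to occupy positions $1,\ldots,k$, and since $k\le m-1$ the resulting prefix of length $k$ is linearly dependent, giving $\ell_T(t'_1\cdots t'_k)<k$.

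The remaining case $k=m$ is the heart of the proof: the whole sequence of roots is a single circuit. Here I would invoke Lemma~\ref{acuteness-lemma} to partition $C = A\sqcup B$, both parts nonempty, with $\langle c_a\alpha_a, c_b\alpha_b\rangle\le 0$ for every $a\in A$ and $b\in B$. The plan is to use this disconnection to find a sequence of Hurwitz moves that strictly shrinks the circuit, reducing to the case $k<m$ already handled. Direct computation shows that a Hurwitz move at an adjacent pair of circuit roots $(\alpha,\beta)$ with coefficients $(c,d)$ replaces $d$ by $d + c\cdot\frac{2\langle\alpha,\beta\rangle}{|\beta|^2}$ while leaving the other coefficients untouched. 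I expect the main obstacle to be in exploiting the non-acuteness across $A$ and $B$—together with the existence of at least one strictly obtuse cross-pair, forced by $\lVert\sum_i c_i\alpha_i\rVert^2 = 0$—to choose, possibly after some circuit-preserving rearrangements, an adjacent pair whose move makes this new coefficient vanish. Once a proper sub-circuit appears in the new root set, the argument of the previous paragraph finishes the job.
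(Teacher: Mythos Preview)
Your reduction to the circuit case and the handling of $|C|<m$ via Proposition~\ref{prefix-prop}-style moves are fine and match the paper. The gap is in your treatment of the full-circuit case $|C|=m$. You hope that the acuteness partition $A\sqcup B$ will let you locate an adjacent pair whose Hurwitz move \emph{zeros} a coefficient outright. But the move replaces $d$ by $d + c\cdot\frac{2\langle\alpha,\beta\rangle}{|\beta|^2}$, and there is no reason this should ever hit $0$ exactly; the cross-pair being obtuse only tells you that the increment has a definite sign. (Also, the identity $\ell_T(t_1\cdots t_k)=\dim\Span(\alpha_1,\ldots,\alpha_k)$ that you quote is false in general---take $s_\alpha s_\beta s_\alpha$---though you only use the correct equivalence $\ell_T<k\iff$ dependent, which is Carter's lemma.)

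The paper's actual mechanism is an \emph{iterated weight-decrease}: one defines $\wt(\ccc)=\sum|c_i|$, shows (Proposition~\ref{proposition: obtuse angles are good}) that at an obtuse adjacent pair one of $\sigma_i^{\pm 1}$ strictly lowers $\wt$, and uses the acuteness disconnection only to guarantee such a pair can always be brought adjacent (Proposition~\ref{proposition:acuteness-do-not-obstruct}). The subtle point you are missing entirely is \emph{termination}: why can't the weight decrease forever, cycling through infinitely many circuits? The paper handles this (Proposition~\ref{repeated-circuits-have-same-weight}) by observing that the ideal generated by the $c_i$ in the ring of integers $\ooo\subset K$ is Hurwitz-invariant, so revisiting the same circuit forces the same weight; since there are finitely many circuits, some coefficient must eventually vanish. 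This argument needs $\ooo$ to be a PID, which fails for general $\type{I}_2(m)$---hence the paper's separate rank-$2$ argument via the infinite dihedral group, which your proposal also omits.
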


\noindent
The third step, also in Section~\ref{main lemma section},
iterates Lemma~\ref{main lemma} to put reflection factorizations 
into a standard form.

\begin{corollary}
\label{cor:standard form}
If $\ell_T(w)=\ell$, then every factorization of $w$ into $m$ reflections
lies in the Hurwitz orbit of some $\boldt=(t_1, \ldots, t_m)$ such that
\begin{align*}
t_1 &= t_2,\\
t_3 &= t_4,\\
    &\vdots \\
t_{m - \ell - 1} &= t_{m - \ell},
\end{align*} 
and
$(t_{m - \ell + 1}, \ldots, t_{m})$ is a shortest
reflection factorization of $w$.
\end{corollary}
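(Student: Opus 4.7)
The plan is to induct on $m$. If $m = \ell$ then the factorization is already shortest and so lies in the claimed standard form with no pair constraints. Otherwise $m > \ell$ and Lemma~\ref{main lemma} applies. If $m = 2$, then the sign representation of $W$ forces $\ell \equiv m \pmod{2}$, so $\ell = 0$ and $w = e$; combined with $t_i^2 = e$ this yields $t_1 = t_2$, which is the standard form. If instead $m > 2$, Lemma~\ref{main lemma} produces a Hurwitz-equivalent tuple $\boldt' = (t'_1, \ldots, t'_m)$ together with an index $k \le m - 1$ for which $\ell_T(u) < k$, where $u := t'_1 \cdots t'_k$.

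The recursion now proceeds in two stages. First, apply the inductive hypothesis to the prefix $(t'_1, \ldots, t'_k)$ as a length-$k$ reflection factorization of $u$: since $k < m$ this prefix can be put into standard form, and the strict inequality $\ell_T(u) < k$ guarantees that the standardized prefix begins with a pair $(s, s)$ for some reflection $s$. The Hurwitz moves realizing this reduction use only strands $1, \ldots, k - 1$, so they lift verbatim to moves on the full tuple that fix the suffix $(t'_{k+1}, \ldots, t'_m)$. Second, peel off the leading pair $(s, s)$, which multiplies to the identity: the remaining $m - 2$ reflections form a length-$(m-2)$ factorization of $w$, and a further appeal to the inductive hypothesis (with Hurwitz moves supported on strands $3, \ldots, m - 1$) places pairs at positions $3, 4$ through $m - \ell - 1, m - \ell$, followed by a shortest factorization of $w$ at positions $m - \ell + 1, \ldots, m$. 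Together with the initial pair at positions $1, 2$, this is the desired standard form.

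The only obstacle is bookkeeping: one must check that each recursive call is applied to a strictly shorter factorization (immediate from $k \le m - 1$ and $m - 2 < m$), that the parity constraint $m \equiv \ell \pmod 2$ makes the pair structure consistent at every level of the recursion, and that the second stage's Hurwitz moves, being supported on strands $3, \ldots, m - 1$, do not touch the initial pair at positions $1, 2$. The substantive content of the result is already packaged in Lemma~\ref{main lemma}, which supplies the short prefix that allows each round of the recursion to extract one new pair at the front.
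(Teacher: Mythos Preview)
Your proof is correct and follows essentially the same inductive scheme as the paper's. The only organizational difference is that the paper iterates Lemma~\ref{main lemma} directly on the shortest ``bad'' prefix to drive its length down to $2$ before peeling off one pair, whereas you invoke the full inductive hypothesis on that prefix (which does strictly more work than needed) to produce the leading pair; either way the substantive input is the same and the recursion terminates for the same reasons.
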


\noindent
Section~\ref{main theorem section} then
finishes off the proof of Theorem~\ref{main theorem},
using the case of Corollary~\ref{cor:standard form} 
where $w$ is a Coxeter element $c$, along
with Bessis's Theorem above, and Bessis's observation that any reflection $t$ can occur 
first in a shortest factorization of $c$.
Section~\ref{remarks section} collects a few remarks
and questions suggested by this work.

We note that the proof of the crucial Lemma~\ref{acuteness-lemma} 
is case-based and 
relies on large computer calculations.  The remaining steps of the argument 
are case-independent (at least in the crystallographic case), so that one 
might hope to make the argument fully human-comprehensible by giving a 
case-free proof of Lemma~\ref{acuteness-lemma}.

\section*{Acknowledgements}
This work was partially supported by NSF grants DMS-1148634 and DMS-1401792.
The authors thank 
Guillaume Chapuy, 
Theodosios Douvropoulos, 
Vivien Ripoll, 
and Christian Stump for helpful conversations.
They also thank Patrick Wegener both for helpful comments,
and for the content of Section~\ref{Wegener-section}.

\section{Background and terminology}
\label{background section}

In this section, we review some standard definitions and facts about finite real reflection groups and root systems.  Good references for this material are \cite[Chs.~1, 4]{BjornerBrenti}, \cite{Humphreys}, and \cite[\S\S 2.1--2.2]{Armstrong}.

\begin{definition}
Let $(V, \langle \cdot, \cdot \rangle)$ be a finite-dimensional
Euclidean space, that is, a real vector space $V \cong \RR^n$ with a positive 
definite symmetric bilinear from $\langle \cdot, \cdot \rangle$,
whose associated norm $|v|$ is given by $|v|^2=\langle v, v \rangle$.
For a vector $\alpha$ in $V$, the \defn{reflection} $s_\alpha$ through the hyperplane $H=\alpha^\perp$ is the linear map given by the formula
\begin{equation}
\label{reflection formula}
s_\alpha(v) = v - \frac{2 \langle v,\alpha\rangle}
                       {|\alpha|^2} \alpha.
\end{equation}
A \defn{finite reflection group} is a 
finite subgroup $W$ of $GL_n(\RR)$ generated by
its subset $T \subset W$ of reflections.  
\end{definition}

Since reflections lie
within the orthogonal group $O_n(\RR)$, so does $W$.  That is,
$W$ preserves $\langle \cdot,\cdot \rangle$.  

\begin{definition}
A \defn{(finite, reduced, not-necessarily-crystallographic) root system}
associated to a finite reflection group $W$ is any $W$-stable subset $\Phi \subset V$ 
consisting of a choice of two opposite normal vectors 
$\pm \alpha$ for each reflecting
hyperplane $H$ of a reflection $t$ in $T$. 
We will assume $W$ has no fixed vector in $V$, that is, 
the $w$-fixed spaces defined by $V^w:=\{ v \in V: w(v)=v\}$ satisfy 
$
\bigcap_{w \in W} V^w=\{0\}.
$
\end{definition}

It is not hard to see that root systems $\Phi$ for $W$
are parametrized by picking a representative $t$ of each conjugacy class
of reflection and choosing a scaling for the normal vectors $\pm \alpha$ to the reflecting hyperplane of $t$.  On the other hand, one can axiomatize such root systems as follows:
they are the collections of finitely many nonzero vectors 
$\Phi \subset V$ with the property that
$s_\alpha(\beta) \in \Phi$ for all $\alpha, \beta \in V$,
and $\Phi \cap \RR \alpha =\{\pm \alpha \}$ for all $\alpha$ in $\Phi$.
In this case, one recovers $W$ as the group generated by the reflections 
$\{s_\alpha: \alpha \in \Phi\}$.

\begin{definition}
An  \defn{open Weyl chamber} $F$ for a finite reflection group $W$ is
a connected component of the complement within $V$ of the
union of the reflecting hyperplanes for all reflections $t$ in $T$.
\end{definition}

\noindent
It turns out that $W$ acts simply transitively on the set of
Weyl chambers.  Also, the closure $\overline{F}$ of any Weyl chamber $F$ is a {\it fundamental
domain} for the action of $W$ on $V$:  every $W$-orbit $Wv$ on $V$
has $|(Wv) \cap \overline{F}|=1$.

\begin{definition}
The set $\Phi^+$ of \defn{positive roots} corresponding to a 
choice of an open Weyl chamber $F$ is 
\[
\Phi^+:=\{ \alpha \in \Phi: \langle \alpha, v \rangle > 0 
\text{ for all }v\text{ in }F\}.
\]
The associated set of \defn{simple roots} 
$\Pi \subset \Phi^+$ 
is the set of inward-pointing normal vectors to the walls of $\overline{F}$.
\end{definition}

It is easily seen that $\Phi = \Phi^+ \sqcup \left( -\Phi^+ \right)$.
Less obvious are the following properties of the simple roots 
$\Pi=\{\alpha_1,\ldots,\alpha_n\}$:
\begin{compactitem}
\item they are pairwise non-acute, 
\item they form an $\RR$-basis for $V$, 
\item they contain $W$-orbit representatives for all of the roots, and
\item every $\alpha \in \Phi^+$ has its 
unique expression $\alpha=\sum_{i=1}^n c_i \alpha_i$ with
$c_i \geq 0$ for all $i$.
\end{compactitem}

\begin{definition}
A finite reflection group $W$ is called \defn{reducible} if there
exists a nontrivial orthogonal direct sum decomposition $V=V_1 \oplus V_2$
respected by $W$.  
\end{definition}

\noindent
Reducibility of the group $W$ is equivalent to the
existence of a nontrivial decomposition 
$\Phi=\Phi_1 \sqcup \Phi_2$ with $\langle \alpha_1, \alpha_2 \rangle=0$
when $\alpha_i \in \Phi_i$ for $i=1,2$, in any (or every) 
root system $\Phi$ for $W$.
It is also equivalent to the existence of a
nontrivial decomposition 
$\Pi=\Pi_1 \sqcup \Pi_2$  with 
$\langle \alpha_1, \alpha_2 \rangle=0$
when $\alpha_i \in \Pi_i$ for $i=1,2$, in any (or every) 
choice of simple roots $\Pi$ for $\Phi$.
In this situation, $W=W_1 \times W_2$ where $W_i$ is the
subgroup generated by $\{s_\alpha: \alpha \in \Phi_i\}$, 
or by $\{s_\alpha: \alpha \in \Pi_i\}$.

There is a \defn{classification} of finite irreducible reflection groups $W$. 
It contains four infinite families and six exceptional groups: 
\begin{compactitem}
\item type $\type{A}_{n-1}$ for $n \geq 2$, where $W$ is isomorphic
to the symmetric group on $n$ letters,
\item type $\type{B}_n/\type{C}_n$ for $n \geq 2$, where $W$ is the hyperoctahedral
group of $n \times n$ signed permutation matrices,
\item type $\type{D}_n$ for $n \geq 4$, where $W$ is an index two
subgroup of the hyperoctahedral group,
\item type $\type{I}_2(m)$ for $m \geq 3$, where $W$ is the dihedral
group of symmetries of a regular $m$-gon, and
\item exceptional types $\type{E}_6, \type{E}_7, \type{E}_8, \type{F}_4, \type{H}_3, \type{H}_4$.
\end{compactitem}

We will later need to consider the field extension $K$ of $\QQ$
that adjoins to $\QQ$ the elements
$\frac{\langle \beta, \alpha\rangle}
{|\alpha|^2}$ for all $\alpha, \beta$ in $\Phi$.
If we normalize all of the roots to the same length, then 
$
\frac{\langle \beta, \alpha\rangle}{|\alpha|^2} = \cos\left(\frac{2\pi}{m}\right)
$
if the rotation $s_\alpha s_\beta$ has order $m$.
This number is always algebraic, so we may assume that $K$ is
a number field, that is, a finite extension of $\QQ$.
We can sometimes do better.

\begin{definition}
Say a root system $\Phi$ is \defn{crystallographic} if
\begin{equation}
\frac{2\langle \beta, \alpha\rangle}{|\alpha|^2} \in \ZZ
\text{ for all }
\alpha,\beta \in \Phi.
\end{equation}
\end{definition}

\noindent
Of course, if $\Phi$ is crystallographic, then $K=\QQ$.
Since rescaling roots within a $W$-orbit does not affect $W$ itself
(or any of the circuit properties to be discussed later), we 
always choose without further mention 
a crystallographic root system $\Phi$ for $W$ when one is available.
This means that $\Phi$ will be chosen crystallographic
in all types except $\type{H}_3$, $\type{H}_4$ 
(where one can take $K=\QQ[\sqrt{5}]$), and $\type{I}_2(m)$ for 
$m \not\in \{3, 4, 6\}$.

\section{Circuit classification and proof of Lemma~\ref{acuteness-lemma}}
\label{section: acuteness}

The goal of this section is to prove
Lemma~\ref{acuteness-lemma} from the Introduction, which
we recall here.  Fix a finite 
(not-necessarily-crystallographic) root system
$\Phi$ in a Euclidean space $(V,\langle \cdot,\cdot \rangle)$.

\begin{definition}
A finite subset $C=\{\alpha_1,\ldots,\alpha_m\} \subseteq V$ 
is a {\it circuit} if it has a nontrivial dependence $c_1 \alpha_1 + \cdots + c_m \alpha_m=0$, but no proper subset of $C$ is dependent.
Given a circuit $C$, the dependence coefficients $(c_1, \ldots, c_m) \in \RR^m$ are uniquely determined up to simultaneous $\RR$-scaling. 
Thus, one may define the \defn{acuteness graph} $\Gamma_C$ to have vertex set $\{1,2,\ldots,m\}$
and an edge $\{i,j\}$ whenever $\langle c_i \alpha_i, c_j \alpha_j \rangle > 0$.
\end{definition}

\vskip.1in
\noindent
{\bf Lemma~\ref{acuteness-lemma}.}
{\it 
In a finite root system,
every circuit $C$ has disconnected acuteness graph $\Gamma_C$.
}

\vskip.1in
We will often abuse terminology by considering two circuits $C, C'$ to be 
the {\it same} when they span the same set of lines 
$\{\RR \alpha \}_{\alpha \in C} = \{\RR \alpha' \}_{\alpha' \in C'}$,
or have the same set of normal hyperplanes 
$\{\alpha^\perp \}_{\alpha \in C} = \{(\alpha')^\perp \}_{\alpha' \in C'}$.
Note that in this case, $\Gamma_C=\Gamma_{C'}$.  In fact, our
figures below will depict slightly more graphical information about
the circuits $C$, namely an {\it acuteness-obtuseness graph} that shows
the $c_i \alpha_i$ labeling vertices, and these solid (acute) and 
dotted (obtuse) edges:
\[
\begin{cases}
\xymatrix{c_i \alpha_i\ar@{-}[r] &c_j \alpha_j}
&    \text{ when }\langle c_i \alpha_i, c_j \alpha_j \rangle >0, \\
\xymatrix{c_i \alpha_i\ar@{.}[r] &c_j \alpha_j}
&  \text{ when }\langle c_i \alpha_i, c_j \alpha_j \rangle <0,  \\
\xymatrix{c_i \alpha_i &c_j \alpha_j}
&  \text{ when }\langle c_i \alpha_i, c_j \alpha_j \rangle =0.
\end{cases}
\]
The acuteness graph $\Gamma_C$ comes from erasing the
dotted (obtuse) edges in the acuteness-obtuseness graph.

Our proof of Lemma~\ref{acuteness-lemma} 
relies on a classification of circuits in finite 
root systems, which may be of independent interest.  Such
a classfication is essentially already provided in the 
classical types $\type{A}_{n-1}, \type{B}_n/\type{C}_n, \type{D}_n$ by
Zaslavsky's theory of {\it signed graphs} \cite{Zaslavsky},
and we rely on a computer calculation for the exceptional types.

\begin{remark}
\label{Stembridge-irreducible-circuit-remark}
A different sort of circuit classification in finite root systems was undertaken by
Stembridge \cite{Stembridge}, who defined the notion of an \emph{irreducible circuit}.  Say that a circuit $C=\{\alpha\} \cup I \subset \Phi$ is irreducible if $\alpha$ is in the positive linear span of $I$, and no proper subset of $I$ has any elements of  $\Phi \smallsetminus I$ in its positive linear span.  Stembridge gave a classification, up to isometry, of the irreducible circuits in all finite root systems.  Unfortunately, we did not see how to check Lemma~\ref{acuteness-lemma} directly from the classification of irreducible circuits.  See also Example~\ref{Stembridge-irreducible-circuit-example} below.
\end{remark}

Given a finite reflection group $W$ and a choice of
a root system $\Phi_W$ in $V\cong \RR^n$ for $W$, 
one might attempt to classify all of the 
circuits $C \subset \Phi_W$ up to the action of $W$, that is,
regarding $w(C)$ and $C$ equivalent for all $w$ in $W$.
We will do slightly less, taking advantage of
the following reduction.

\begin{definition}
Call a circuit $C \subset \Phi_W$ a {\it full circuit} if
$\{s_\alpha: \alpha \in C\}$ generates the group $W$.
\end{definition}

Non-full circuits $C \subset \Phi_W$ lie in a root
system $\Phi_{W'}$ for some proper subgroup $W'$ of $W$,
and hence can be dealt with by induction.
Thus we will only classify the full circuits in $\Phi_W$
up to the $W$-action.\footnote{In principle, one could fill in the rest of the classification data using, e.g., the work of Douglass--Pfeiffer--R\"{o}hrle \cite{DouglassPfeifferRohrle}, which classifies the reflection subgroups of finite real reflection groups up to conjugacy.}  Along the way, we will check
that Lemma~\ref{acuteness-lemma} holds in each case.

One further reduction to note is that \emph{only} irreducible root
systems $\Phi_W$ contain full circuits $C$:  if one has 
$V=V_1 \oplus V_2$ with $\Phi=\Phi_1 \sqcup \Phi_2$ and $W = W_1 \times W_2$ 
then the circuit $C \subset \Phi$ being inclusion-minimal
forces $C \subset \Phi_i$ for either $i=1$ or $2$, and hence
$\{s_\alpha: \alpha \in C\} \subset W_i$ for either $i=1$ or $2$.
Thus we only need to consider the irreducible finite root systems.

\subsection{Rank 1}
Here $C=\Phi=\{\pm \alpha\}$, whose
acuteness-obtuseness graph has two vertices and a dotted edge:
$$
\xymatrix{+\alpha &\ar[l]\ar[r]& -\alpha} 
\qquad \qquad
\xymatrix@R=.1in@C=.1in{+\alpha \ar@{.}[rrr] & & & -\alpha}
$$

\subsection{Rank 2: the dihedral types $\type{I}_2(m)$}
A full circuit $C = \{\alpha_1, \alpha_2, \alpha_3\}$ 
satisfies $c_1 \alpha_1+c_2 \alpha_2+c_3\alpha_3=0$
for some scalars $c_i$.  Taking the inner product
of both sides of this equation with $c_i \alpha_i$ and noting
that $\langle c_i \alpha_i, c_i \alpha_i \rangle >0$,
one concludes that at most one of the other two inner products 
$\langle c_i \alpha_i, c_j \alpha_j \rangle,
\langle c_i \alpha_i, c_k \alpha_k \rangle$
where $\{i,j,k\}=\{1,2,3\}$
can be positive.  Hence each vertex $i=1,2,3$ 
is incident to at most one edge in the acuteness graph $\Gamma_C$ on
vertex set $\{1,2,3\}$, forcing $\Gamma_C$ to be disconnected---see the typical
pictures below.
\[
\xymatrix@R=.1in@C=.1in{
 & & & & \\
 & & & & \\
 & &\ar[ull]\ar[urr]\ar[dd]& & \\
 & & & & \\
 & & & & 
} \quad
\xymatrix@R=.1in@C=.1in{
c_1 \alpha_1 \ar@{.}[rrrr]& & & & c_2 \alpha_2\ar@{.}[dddll]\\
 & & & & \\
 & & & & \\
 & &c_3 \alpha_3\ar@{.}[uuull]& & 
}
\qquad \qquad
\xymatrix@R=.1in@C=.1in{
  & &  \\
  & &  \\
  &\ar[uul]\ar[uur]\ar[dd]& \\
  & &  \\
  & &  
} \quad
\xymatrix@R=.1in@C=.1in{
c_1 \alpha_1 \ar@{-}[rr]&  & c_2 \alpha_2\ar@{.}[dddl]\\
  & & \\
  & & \\
  &c_3 \alpha_3\ar@{.}[uuul]&  
}
\]

Although the rank $2$ setting required no classification
of the $W$-orbits of full circuits $C \subset \Phi_W$, 
such a classification is not hard.  Consider
the unordered triple $\{A_{12},A_{13},A_{23}\}$, where
$\frac{\pi}{m} A_{ij}$ is the angular measure of the sector 
$\RR_{\geq 0} c_i\alpha_i + \RR_{\geq 0} c_j\alpha_j$,
so that $A_{ij} \in \{1,2,\cdots,m-1\}$ and $A_{12}+A_{13}+A_{23}=2m$.
One checks that $C$ is a full circuit in $\Phi_W$ 
if and only if $g:=\gcd\{A_{12},A_{13},A_{23}\}=1$; otherwise
$C$ is full inside a sub-root system of
type $\type{I}_2(m')$ with $m':=\frac{m}{g}$. 
Furthermore, if $m$ is odd, the unordered triple  $\{A_{12},A_{13},A_{23}\}$
completely determines the $W$-orbit of $C$, while for even $m$, 
there are exactly two $W$-orbits corresponding to each such 
triple, represented by circuits that differ from each other
by a $\frac{\pi}{m}$ rotation.

\begin{remark}
The rank $2$ case raises a reasonable question:  does
the conclusion of  Lemma~\ref{acuteness-lemma} have anything
at all to do with root systems?  In other words, is it possible
that \emph{any} minimal linearly dependent set of
vectors $C=\{\alpha_1,\ldots,\alpha_m\}$ in a Euclidean space $V$
has disconnected acuteness graph $\Gamma_C$?  
Unfortunately, this is \emph{not} true for $\dim(V) \geq 3$.
A result of Fiedler~\cite[Thm.~2.5]{Fiedler}, stated in terms of 
of the \defn{Gram matrix} 
$( \langle \alpha_i, \alpha_j \rangle )_{i,j=1,\ldots,m}$, 
asserts that $C$ will have its \defn{obtuseness graph}
connected, and that one can in fact, find a circuit $C$ with any
prescribed set of obtuse pairs, orthogonal pairs, and acute
pairs, as long as the obtuse pairs form a connected graph.  
When $\dim(V) \geq 3$, this means one can
have both the obtuseness and acuteness graphs being connected.
For example, one has a circuit $\alpha_1+\alpha_2+\alpha_3+\alpha_4=0$
with the following four vectors $(\alpha_i)_{i=1}^4$ in $\RR^3$, 
having acuteness-obtuseness graph as shown:
\[
C=\left(
\alpha_1=\left[ \begin{matrix} -6\\-3\\0\\ \end{matrix} \right], 
\alpha_2=\left[ \begin{matrix} -1\\1\\0\\ \end{matrix} \right],
\alpha_3=\left[ \begin{matrix} 1\\2\\2\\ \end{matrix} \right],
\alpha_4=\left[ \begin{matrix} 6\\0\\-2\\ \end{matrix} \right]
\right)
\qquad \qquad
\raisebox{25pt}{
\xymatrix@R=.2in@C=.2in{
\alpha_1 \ar@{.}[rr]\ar@{-}[dd]&  & \alpha_4\ar@{.}[ddll]\ar@{-}[dd]\\
 & \\
\alpha_2 \ar@{-}[rr]          &  & \alpha_3\ar@{.}[uull]
}}.
\]

\end{remark}

\subsection{Type $\type{A}_{n-1}$ for $n \geq 3$.}
Consider $\RR^n$ with its usual inner product $\langle \cdot, \cdot \rangle$
making the basis vectors $e_1,\ldots,e_n$ orthonormal.
Inside the codimension-one subspace $V=(e_1+\cdots+e_n)^\perp \subset \RR^n$,
considered as a Euclidean space via the restriction of 
$\langle \cdot, \cdot \rangle$, 
one has the \defn{type $\type{A}_{n-1}$ root system}
\[
\Phi_{\type{A}_{n-1}}=\{\pm (e_i - e_j): 1 \leq i < j \leq n\}.
\]
The Weyl group $W$ is the symmetric group $\Symm_n$, permuting
the coordinates in $\RR^n$ and preserving the subspace $V$.
It is well-known and easily checked that full circuits 
in $\Phi_{\type{A}_{n-1}}$ all lie in the $W$-orbit of 
\[
C=\{ 
\alpha_1=e_{1}-e_{2}, \quad 
\alpha_2=e_{2}-e_{3},  \quad
\ldots, \quad
\alpha_{n-1}=e_{n-1}-e_{n}, \quad
\alpha_n=e_{n}-e_{1}
\},
\]
whose minimal dependence is $\alpha_1+\cdots+\alpha_n=0$.  
Since $\langle \alpha_i, \alpha_j \rangle \in \{ -1,  0\}$ for
each $i \neq j$, its acuteness graph $\Gamma_C$ contains $n$ vertices and no edges, and so is disconnected.

Pictorially, one may associate to a subset of $\Phi_{\type{A}_{n-1}}$
a graph on vertex set $\{1,2,\ldots,n\}$ in which the  
roots $\pm (e_i-e_j)$ perpendicular to the hyperplane $x_i=x_j$
are associated with the edge $\xymatrix{i \ar@{-}[r]&j}$.  Circuits
then correspond to graphs that are cycles, and the circuit $C$ above for
$n=4$ would be depicted as the graph on the left, with its acuteness-obtuseness graph shown to its right:
$$
\raisebox{-.05in}{
\xymatrix@R=.3in@C=.3in{
  2\ar@{-}[r]  & 3\ar@{-}[d] \\
  1 \ar@{-}[u] & 4\ar@{-}[l]}
}
\qquad
\xymatrix@R=.1in@C=.1in{
           &+1(e_2-e_3) \ar@{.}[dr]& \\
+1(e_1-e_2) \ar@{.}[ur]&           &+1(e_3-e_4) \ar@{.}[dl]\\
           &+1(e_4-e_1) \ar@{.}[ul]&
}
$$

\begin{remark}
This $W$-orbit of full circuits $C$ in type $\type{A}$ where
$\Gamma_C$ has no edges at all generalizes to an interesting and well-known
family of full circuits for each irreducible crystallographic root system $\Phi$,
which we describe here.  Choose an open fundamental chamber $F$ for
$W$, with corresponding choice of positive roots $\Phi^+$
and simple roots $\Pi$. Then there will always be 
either one or two roots in $\overline{F} \cap \Phi$, namely 
\begin{compactitem}
\item the \defn{highest root} $\alpha_0$,
whose unique expression $\alpha_0=\sum_{i=1}^n c_i \alpha_i$ as
a positive root simultaneously maximizes all the coefficients $c_i$ 
(in particular $c_i > 0$ for each $i=1,\dots,n$), and
\item
the \defn{highest short root} 
$\alpha^* := \left( \alpha_0(\Phi^\vee) \right)^\vee$,
where $\alpha^\vee:=\frac{2\alpha}{|\alpha|^2}$
and $\alpha_0(\Phi^\vee)$ is the highest root for the
{\it dual} crystallographic 
root system $\Phi^\vee:=\{\alpha^\vee: \alpha \in \Phi\}$.
(When $\Phi^\vee=\Phi$, one has $\alpha_0=\alpha^*$.)
\end{compactitem}
Either of the roots $\beta=\alpha_0$ or $\beta=\alpha^*$
gives rise to a full circuit 
$C=\{-\beta\} \sqcup \Pi \subset \Phi$ 
whose minimal dependence has the form
$
-\beta + c_1 \alpha_1 + \cdots c_n \alpha_n = 0.
$
The acuteness graph $\Gamma_C$ has no edges, since
the simple roots are pairwise non-acute and
since $\beta$ in $\overline{F} \cap \Phi$ means that
$\langle \beta ,\alpha_i \rangle \geq 0$ for all $\alpha_i$ in $\Pi$.  
\end{remark}

\subsection{Type $\type{D}_n$ for $n \geq 3$}
The {\it type $\type{D}_n$ root system}
$$
\Phi_{\type{D}_n}=\{\pm e_i \pm e_j: 1 \leq i < j \leq n\}
$$
has Weyl group $W$ which is an index-two subgroup of the
{\it hyperoctahedral group} $\Symm_n^\pm$ of 
all {\it signed permutations} $e_i \mapsto \pm e_{w(i)}$.
Specifically, $W=W(\type{D}_n)$ consists of those signed
permutations in which there are {\it evenly many} indices
$i$ for which $e_i \mapsto -e_{w(i)}$.

Just as one can associate graphs whose edges correspond to pairs $\pm \alpha$ 
of roots in type $\type{A}$, 
Zaslavsky's theory of {\it signed graphs} \cite{Zaslavsky}
associates to each root pair $\pm \alpha$ in $\Phi_{\type{D}_n}$ 
(or reflecting hyperplane $x_i = \pm x_j$)
an edge $\{i,j\}$ on vertex set $\{1,2,\ldots,n\}$
with a $\pm$ label:  
\begin{compactitem}
\item the roots $\alpha=\pm (e_i - e_j)$ with $\alpha^\perp$
defined by $x_i = + x_j$ give rise to \defn{plus edges}
$
\xymatrix{i \ar@{-}[r]^{+}& j}
$,
and
\item the roots $\alpha=\pm (e_i + e_j)$ with $\alpha^\perp$
defined by $x_i = - x_j$ give rise to \defn{minus edges}
$
\xymatrix{i \ar@{-}[r]^{-}& j}.
$
\end{compactitem}
Call a cycle in a signed graph \defn{balanced} if it has an even number of minus edges, and \defn{unbalanced} otherwise.
\begin{proposition}[{Zaslavsky \cite[Thm.~5.1(e)]{Zaslavsky}}]
\label{prop:Zaslavsky}
A set of roots in a root system of classical type is a circuit if and only if its associated signed graph is one of the following types:
\begin{compactenum}[(i)]
\item a balanced cycle,
\item two edge-disjoint unbalanced cycles,
having either a path joining a vertex of one cycle to a vertex of the other,
or else sharing exactly one vertex. 
\end{compactenum}
\end{proposition}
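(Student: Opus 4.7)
The plan is to translate the linear-dependence question into a combinatorial problem on signed graphs, prove a rank formula, and then do a short case analysis on the graph's topology. Under the correspondence recalled in the excerpt (a plus edge for each pair $\pm(e_i - e_j)$ and a minus edge for each pair $\pm(e_i + e_j)$, with the obvious extension via half-edges at a single vertex for $\pm e_i$ in types $\type{B}_n/\type{C}_n$), a set of roots is a circuit in the matroid of $\Phi$ if and only if the associated signed edge set is a circuit of the signed-graphic matroid.

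My first step is a rank formula: for any signed subgraph $G$,
\[
\rk(G) = |V(G)| - b(G),
\]
where $b(G)$ is the number of connected components of $G$ in which every cycle is balanced. This is proved via \emph{vertex switching}: negating $e_v$ for a vertex $v$ is an orthogonal change of coordinates that flips the signs of the edges at $v$ while preserving the balance class of every cycle and the rank of the associated roots. A connected balanced component can be switched to have all plus edges, reducing to type $\type{A}$ with known rank $|V|-1$; a connected unbalanced component can be switched so that some spanning tree is all plus, and then the additional edge of an unbalanced cycle gives a root of the form $e_i + e_j$, which together with the spanning-tree roots spans $\RR^{V}$, yielding rank $|V|$.

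Combined with the cyclomatic identity $|E(G)| = |V(G)| - c(G) + \mu(G)$, the circuit condition $|E(G)| = \rk(G) + 1$ becomes $\mu(G) = c(G) - b(G) + 1$. If every component of $G$ is balanced (so $b = c$), then $\mu(G) = 1$ and $G$ contains a single cycle; minimality forces $G$ to equal that balanced cycle, giving case (i). Otherwise, by minimality $G$ has no balanced cycle at all, so $b(G) = 0$. Two unbalanced cycles in different components of $G$ lie in orthogonal coordinate subspaces where each is independent, contradicting that $G$ is a circuit; hence $c(G) = 1$, so $\mu(G) = 2$, and $G$ is connected with exactly two independent cycles. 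A standard minimality reduction (a degree-one vertex $v$ forces the unique incident edge to appear with coefficient zero in every dependence of $G$, so $G$ has no pendant vertex) brings the topological type of $G$ down to one of three possibilities: a figure-eight (two cycles sharing one vertex), a dumbbell (two cycles joined by a path), or a theta graph (two branch vertices joined by three internally disjoint paths).

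The theta case is ruled out by a sign-product argument: the three cycles $C_{12}, C_{13}, C_{23}$ of a theta graph formed from paths $P_1, P_2, P_3$ satisfy $\sgn(C_{12})\sgn(C_{13})\sgn(C_{23}) = \sgn(P_1)^2 \sgn(P_2)^2 \sgn(P_3)^2 = +1$, so at least one of the three cycles is balanced, contradicting that $G$ has no balanced cycle. The figure-eight and dumbbell shapes are exactly the two forms described in case (ii), and their minimality is verified directly: deleting any edge of such a $G$ either breaks one of the two cycles (dropping $\mu$ by one) or disconnects $G$ so that a resulting component becomes balanced (decreasing $b$ or increasing $c$), lowering the rank by exactly one in either case. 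The main obstacle is organizing the case analysis cleanly, particularly the sign-product argument eliminating the theta graph; once that step is in place, the two configurations of (ii) emerge as the only remaining circuits.
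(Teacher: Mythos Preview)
The paper does not give its own proof of this proposition: it is quoted as Zaslavsky \cite[Thm.~5.1(e)]{Zaslavsky} and used as a black box. So there is nothing in the paper to compare your argument against; what you have written is essentially a reconstruction of Zaslavsky's proof (rank formula via switching, then a topological classification of graphs with $\mu \leq 2$ and minimum degree $\geq 2$), and it is correct in outline.

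A few places to tighten. First, the step ``by minimality $G$ has no balanced cycle at all, so $b(G)=0$'' tacitly assumes every component contains a cycle; you need the pendant-vertex argument \emph{before} this step to rule out tree components, which are vacuously balanced. Second, the sentence ``two unbalanced cycles in different components \ldots\ each is independent, contradicting that $G$ is a circuit'' is not quite the contradiction you want: the clean argument is that distinct components live in orthogonal coordinate subspaces, so any linear dependence of $G$ splits componentwise, and minimality of a circuit (all coefficients nonzero) then forces $c(G)=1$. Third, in your final minimality check for the dumbbell and figure-eight, deleting an edge does not ``lower the rank by one.'' In every case at least one unbalanced cycle survives in each resulting component, so $b$ remains $0$ and the rank stays at $|V(G)|$; what changes is that $|E|$ drops from $\rk+1$ to $\rk$, which is precisely what gives independence. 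None of these points affect the validity of your overall argument.
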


The circuits of type (ii) in Proposition~\ref{prop:Zaslavsky} 
are exemplified by the following full circuits.
Given $i, j \geq 2$ such that $i + j \leq n + 1$, let $C(n;i,j)$ consist of 
two particular unbalanced cycles of sizes $i,j$, connected by a path having $n+1-(i+j)$ edges:
\begin{multline*}
C(n;i,j) 
:= \{ \; e_1 - e_2, \; e_2 - e_3, \; 
           \ldots, \; e_{i - 1} - e_i, \; -e_1 - e_i \; \}
\cup {} \\
\{ \; e_i - e_{i + 1}, \; e_{i + 1} - e_{i + 2}, \; \ldots, \; e_{j - 1} - e_{n-j+1} \; \}
\cup {}\\
\{ \; e_{n-j+1} - e_{n-j + 2}, \; e_{n-j + 2} - e_{n-j + 3}, \; \ldots, \; e_{n - 1} - e_{n}, \; e_{n-j+1} + e_n \; \}.
\end{multline*}
For example, the circuit $C(12;4,6) \subset \Phi_{\type{D}_{12}}$ corresponds to this signed graph: 
$$
\xymatrix@R=.1in@C=.5in{
                &1\ar@{-}[dl]_{+}&                &               &                             &                &8\ar@{-}[dl]_{+}&9\ar@{-}[l]_{+}\\
2\ar@{-}[dr]_{+}&                &4\ar@{-}[ul]_{-}&5\ar@{-}[l]_{+}&6\ar@{-}[l]_{+}&7\ar@{-}[l]_{+}&              &                &10\ar@{-}[ul]_{+}\ar@{-}[dl]^{+}\\
                &3\ar@{-}[ur]_{+}&                &               &                             &                &12\ar@{-}[ul]^{-}&11\ar@{-}[l]^{+}
}
$$
and this acuteness-obtuseness graph:
$$
\tiny
\xymatrix@R=.1in@C=.1in{
 & & & & & &+1(e_8-e_9)\ar@{.}[dr]\ar@{.}[dl]&\\
+1(e_1-e_2)\ar@{.}[r] & +1(-e_1 - e_4)\ar@{-}[dd] \ar@{.}[dr]&          &           &           &+1(e_7-e_8) \ar@{-}[dd] & &+1(e_9-e_{10})\ar@{.}[dd]\\
        &           & +2(e_4 - e_5) \ar@{.}[r] & +2(e_5 - e_6) \ar@{.}[r]& +2(e_6 - e_7) \ar@{.}[ur] \ar@{.}[dr]&       \\
+1(e_2-e_3)\ar@{.}[r] \ar@{.}[uu]& +1(e_3 - e_4) \ar@{.}[ur]&           &           &           &+1(e_7+e_{12}) & &+1(e_{10}-e_{11})\\
& & & & & &+1(e_{11}-e_{12})\ar@{.}[ur]\ar@{.}[ul]&
}
$$
Note that the conditions $i,j \geq 2$ and $i+j \leq n+1$ on $C(n;i,j)$ allow for various degenerate instances, 
including the most degenerate case $C(3;2,2)$ with the following 
signed graph and acuteness-obtuseness graph:
\[
\raisebox{-.17in}{
\xymatrix@R=.5in@C=.5in{
1\ar@/^/@{-}[r]^{+}\ar@/_/@{-}[r]_{-} &2\ar@/^/@{-}[r]^{+}\ar@/_/@{-}[r]_{-}&3
}
}
\qquad \qquad
\xymatrix@R=.2in@C=.3in{
+1(-e_1 - e_2)\ar@{.}[r] \ar@{.}[dr]&+1(e_2-e_3)\\
+1(+e_1 - e_2)\ar@{.}[r] \ar@{.}[ur]&+1(e_2+e_3)
}
\]

The action of the hyperoctahedral group $\Symm^\pm_n$ on subsets of
$\Phi_{\type{D}_n}$ induces an action on their signed graphs
that Zaslavsky calls {\it switching}: the permutations $\Symm_n \subset \Symm_n^\pm$ simply permute the vertex labels on the signed graphs,
while the sign change $e_i \mapsto -e_i$ 
swaps the two kinds of edges incident to vertex $i$, that
is, it swaps $\xymatrix{i \ar@{-}[r]^{+}& j}$ and 
$\xymatrix{i \ar@{-}[r]^{-}& j}$ for any $j$.   Note that this allows one to perform these changes of edge labels in signed graphs via
the switching $e_i \mapsto -e_i$:
\begin{equation}
\label{switches-moving-minusses}
\begin{aligned}
\xymatrix{k \ar@{-}[r]^{-}&i \ar@{-}[r]^{-}& j} 
 &\quad \rightsquigarrow \quad
\xymatrix{k \ar@{-}[r]^{+}&i \ar@{-}[r]^{+}& j} \\
\xymatrix{k \ar@{-}[r]^{+}&i \ar@{-}[r]^{-}& j} 
 &\quad \rightsquigarrow \quad
\xymatrix{k \ar@{-}[r]^{-}&i \ar@{-}[r]^{+}& j} 
\end{aligned}
\end{equation}

\begin{proposition}
\label{type-D-full-circuits-prop}
Consider the set of full circuits in $\Phi_{\type{D}_n}$ under the action
of $\Symm^\pm_n$, and under the action of its subgroup $W(\type{D}_n)$.
A system of orbit representatives for the $\Symm^\pm_n$-action is
\[
\{ C(n;i,j): 2 \leq i \leq j \text{ and } i + j \leq n + 1\}.
\]
Upon restriction to the $W(\type{D}_n)$-action, 
the $\Symm^\pm_n$-orbit of $C(n;i,j)$ 
\begin{compactitem}
\item is a single $W(\type{D}_n)$-orbit if $n$ is odd or if either of $i, j$ is even, and
\item breaks into two $W(\type{D}_n)$-orbits if $n$ is even and both $i, j$ are odd.
\end{compactitem}
\end{proposition}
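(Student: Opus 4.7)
My proof would proceed in two parts.

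\textbf{Part 1: $\Symm_n^\pm$-orbit representatives.}
By Proposition~\ref{prop:Zaslavsky}, every circuit in $\Phi_{\type{D}_n}$ has signed graph either (i) a balanced cycle or (ii) two edge-disjoint unbalanced cycles, joined by a path or sharing a vertex. I would first rule out (i) for full circuits: a balanced cycle is switching-equivalent (via some element of $\Symm_n^\pm$) to an all-plus cycle, which lies in $\Phi_{\type{A}_{n-1}}\subset\Phi_{\type{D}_n}$ and whose reflections generate (a conjugate of) a subgroup of $\Symm_n\subset W(\type{D}_n)$, proper in $W(\type{D}_n)$ for $n\geq 2$. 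Likewise, a type-(ii) signed graph supported on $k<n$ vertices lies in a parabolic subsystem $\Phi_{\type{D}_k}$ (up to reordering) and so cannot be full. Hence full circuits have signed graphs of type (ii) spanning all $n$ vertices.

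For such circuits, the switching-equivalence class of the signed graph is determined by the underlying unsigned graph together with the unbalanced parity of each cycle. Iterating the moves \eqref{switches-moving-minusses} one may push all minus labels out of the connecting path and collapse the minus labels inside each unbalanced cycle to a single distinguished edge; relabeling coordinates via $\Symm_n\subset\Symm_n^\pm$ then places the two cycles at $\{1,\dots,i\}$ and $\{n-j+1,\dots,n\}$ to produce the form $C(n;i,j)$ with $2\le i\le j$ and $i+j\le n+1$. Distinct such pairs $(i,j)$ yield non-isomorphic underlying graphs (distinguished by the unordered pair of cycle sizes and the length of the connecting path), hence different $\Symm_n^\pm$-orbits.

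\textbf{Part 2: Splitting under $W(\type{D}_n)$.}
The character $\chi(w,\epsilon)=\prod_i\epsilon_i$ is a homomorphism $\Symm_n^\pm\to\{\pm1\}$ with kernel $W(\type{D}_n)$, so the $\Symm_n^\pm$-orbit of $C(n;i,j)$ is a single $W(\type{D}_n)$-orbit iff $\chi$ is nontrivial on the stabilizer of $C(n;i,j)$ in $\Symm_n^\pm$. A signed permutation $(w,\epsilon)$ lies in this stabilizer iff $w$ is an automorphism of the underlying unsigned graph and $\epsilon$ satisfies the cocycle equations $\epsilon_a\epsilon_b=\sigma(\{a,b\})\,\sigma(\{w(a),w(b)\})$ over all edges; the unbalanced parity of both cycles makes these consistent, and since the graph is connected the solution set is a coset of $\{\pm\mathbf{1}\}\subset\{\pm1\}^n$. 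The unsigned graph automorphisms are generated by the reflections $r_1,r_2$ of the two cycles (fixing their path-attachment vertices) and, only when $i=j$, the swap of the cycles reversing the path. Direct calculation shows the stabilizing $\epsilon$ for $r_k$ has $-1$'s in a coordinate-set of size congruent to $i-1$ (resp.\ $j-1$) modulo $2$ when $n$ is even, the cycle-swap yields a constant $\epsilon$, and the element $-I_n$ has parity $(-1)^n$. Multiplicativity of $\chi$ then gives the three cases of the proposition: if $n$ is odd then $-I_n$ itself has odd parity; if $n$ is even and one of $i,j$ is even then the corresponding cycle reflection provides an odd-parity stabilizer; and if $n$ is even with both $i,j$ odd then every generator has even sign-parity, so the stabilizer lies in $W(\type{D}_n)$ and the orbit splits into two.

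The main anticipated obstacle is the careful sign-parity bookkeeping for each graph automorphism, especially the $i=j$ case where the added cycle-swap symmetry might in principle produce an odd-parity stabilizer but in fact does not when the common cycle size is odd; one must check that the solution of the cocycle system for this swap (which has to reverse the path) yields only a constant $\epsilon$.
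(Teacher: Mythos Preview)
Your argument is correct. Part~1 coincides with the paper's proof essentially verbatim: rule out balanced cycles via switching to an all-plus cycle, require the vertex set to be all of $\{1,\ldots,n\}$, and use switching plus relabeling to reach the normal form $C(n;i,j)$.

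Part~2 differs in method. The paper argues the non-splitting cases by simply \emph{exhibiting} explicit elements of $\Symm_n^\pm \smallsetminus W(\type{D}_n)$ fixing $C(n;i,j)$: the element $-I_n$ when $n$ is odd, and the involution $e_k \leftrightarrow -e_{i-k}$ on the first cycle when $i$ is even. For the splitting case it does \emph{not} analyze the stabilizer; instead it constructs a concrete $\ZZ/2\ZZ$-valued $W(\type{D}_n)$-invariant $\pi(C)$, namely the parity of the number of minus edges lying in the unique perfect matching of the underlying graph (which exists precisely because $n$ is even and $i,j$ are odd), and checks that a single sign-switch flips $\pi$. Your approach instead computes the full $\Symm_n^\pm$-stabilizer via graph automorphisms and the cocycle equations, and then evaluates $\chi$ on generators. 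This is more systematic and avoids the clever perfect-matching construction, at the cost of the sign-parity bookkeeping you flag (and of having to say a word about the multigraph cases $i=2$ or $j=2$, though these are harmless since they land in the ``some cycle even'' branch). The paper's approach, by contrast, yields an explicit separating invariant for the two $W(\type{D}_n)$-orbits, which is extra information your method does not directly produce.
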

\begin{proof}
Among the circuits described in Proposition~\ref{prop:Zaslavsky},
the balanced cycles (type (i)) are never full circuits 
in $\Phi_{\type{D}_n}$:  one can use the 
switching action to make them have all plus edges 
$\xymatrix{i \ar@{-}[r]^{+}& j}$, and so the group generated 
by the associated reflections
is conjugate to a subgroup of $\Symm_n \subsetneq W(\type{D}_n)$.

It is easily seen that a circuit of type (ii) in 
Proposition~\ref{prop:Zaslavsky},
having two unbalanced cycles connected by a path, is full in $\Phi_{\type{D}_n}$
if and only if its set of vertices covers $\{1,2,\ldots,n\}$.  
In this case, if its two disjoint cycles have sizes $i, j$ with $i \leq j$, then we claim it is in the 
$\Symm_n^\pm$-orbit of $C(n;i,j)$.  To see this,
perform the following sequence of switchings:
\begin{compactitem}
\item First, apply switches as in \eqref{switches-moving-minusses}
to push all of the minus edges off of the path in the middle, and into the unbalanced cycles at either end.
\item
Then, in each cycle, similarly apply switches to 
push all of the minus edges into one consecutive string, 
touching the unique vertex in the cycle of degree three or more.
\item
Then, in each cycle, apply switches to change pairs of 
consecutive minus edges to plus, so that there is only one minus edge left and it touches the vertex of degree three or more.
\item Finally, apply a permutation in $\Symm_n$ to make the vertex labels match
those of $C(n;i,j)$.
\end{compactitem}

We next analyze the $W$-orbit structure where $W:=W(\type{D}_n)$.  Since $[\Symm^\pm_n:W]=2$, any $\Symm^\pm_n$-orbit is either a single $W$-orbit,
or splits as a union of two $W$-orbits.  
One way to show that a $\Symm_n^\pm$-orbit remains a single $W$-orbit is to exhibit
an element $C$ of the orbit and some $w$ in $\Symm^\pm_n \smallsetminus W$ with
$w(C)=C$.  Note that any circuit $C$ is fixed by the element $w_0$ in $\Symm^\pm_n$
that sends $e_i \mapsto -e_i$ for all $i=1,2,\ldots,n$, and when $n$ is odd, 
$w$ lies in $\Symm^\pm_n \smallsetminus W$.  Thus no $\Symm_n^\pm$-orbits split when $n$ is odd.
Also, if $i$ is even, then the circuit $C(n;i,j)$ is fixed by the element $w$ in $\Symm^\pm_n \smallsetminus W$ that sends $e_k \leftrightarrow - e_{i - k}$ for $1 \leq k \leq i - 1$ (in particular $e_{\frac{i}{2}} \mapsto -e_{\frac{i}{2}}$).
Thus the $\Symm^\pm_n$-orbit of $C(n; i,j)$ does not split when $i$ is even.  A similar argument shows that it does not split when $j$ is even.

It only remains to show that the $\Symm^\pm_n$-orbit of $C(n;i,j)$
\emph{does} split into two $W$-orbits when $n$ is even but $i, j$ are both odd.
To do this, we describe a $\ZZ/2\ZZ$-valued $W$-invariant $\pi(C)$ of these circuits $C$.
Consider the unique perfect matching $M$ of the undirected graph for $C$.  For example, $M$ is shown here as the doubled edges for $(n, i, j) = (16, 5, 7)$:
$$
\xymatrix@R=.2in@C=.2in{
\bullet \ar@{=}[r]\ar@{-}[dd] & \bullet  \ar@{-}[dr] & & & & & & &\bullet \ar@{=}[r]&\bullet \ar@{-}[r]&\bullet \ar@{=}[dd]\\
 &  &\bullet \ar@{=}[r] &\bullet \ar@{-}[r] &\bullet \ar@{=}[r] &\bullet \ar@{-}[r] &\bullet\ar@{=}[r] &\bullet \ar@{-}[ur] \ar@{-}[dr]\\
\bullet \ar@{=}[r] & \bullet  \ar@{-}[ur] & & & & & & &\bullet \ar@{=}[r]&\bullet \ar@{-}[r]&\bullet
}
$$
Define $\pi(C)$ to be the parity of the number of minus 
edges in the signed graph for $C$ that lie in $M$.
Applying elements of $\Symm_n$ to $C$ does not affect $\pi(C)$,
but switches of the form $e_k \mapsto -e_k$ reverse $\pi(C)$.  Thus both values 
$\pi(C)$ in $\ZZ/2\ZZ$ occur within the $\Symm^\pm_n$-orbit of $C(n;i,j)$, while only one value occurs in each $W$-orbit.
\end{proof}

Note that Proposition~\ref{type-D-full-circuits-prop} immediately implies that full circuits $C \subset \Phi_{\type{D}_n}$ have disconnected acuteness graph $\Gamma_C$, since $\Gamma_{C(n;i,j)}$ has at least four vertices but at most two edges.

\subsection{Type $\type{B}_n/\type{C}_n$ for $n \geq 3$} 
Since we are only concerned with the hyperplanes and reflections 
associated to the roots, we are free to choose the crystallographic
root system of type $\type{C}_n$:
\[
\Phi_{\type{C}_n}:=\{\pm e_i \pm e_j: 1\leq i < j \leq n\} \sqcup \{ \pm 2e_i: 1 \leq i \leq n\}.
\]
Here $W=W(\type{C}_n)=\Symm_n^\pm$ is the full hyperoctahedral group of $n \times n$ signed permutations $e_i \mapsto \pm e_{w(i)}$.

As in type $\type{D}$, Zaslavsky \cite{Zaslavsky} associates a signed graph to each subset of roots.  Roots in $\Phi_{\type{D}_n}$ correspond to (signed) edges as before, and the pair $\pm 2e_i$ is depicted as a {\it self-loop} on vertex $i$, with a minus sign.  Such a self-loop is considered an unbalanced cycle (with one edge).  Then Proposition~\ref{prop:Zaslavsky} remains correct as a characterization of circuits $C \subset \Phi_{\type{C}_n}$, that is, they are either of type (i) or (ii) mentioned there, allowing for self-loops as unbalanced cycles.

We thus extend the definition of the circuits $C(n;i,j)$ to allow $C(n;1,j)$ where $1 \leq j \leq n$:
\begin{multline*}
C(n; 1, j) := 
\{\; -2e_1 \;\} \; \cup \;
\{ \; e_1 - e_2, \; e_2 - e_3, \; \ldots, \; e_{n -j} - e_{n-j+1} \; \}
\cup {} \\
\{ \; e_{n-j+1} - e_{n-j+2}, \; e_{n-j+2} - e_{n-j+3}, \; \ldots, \; e_{n-1} - e_n \; e_{n-j+1}+e_n\}.
\end{multline*}
The following example depicts $C(9;1,6)$ as a signed graph, as well as its acuteness-obtuseness graph:
\[
\qquad
\xymatrix@R=.1in@C=.5in{
                &               &                             &                &5\ar@{-}[dl]_{+}&6\ar@{-}[l]_{+}\\
1 \ar@(ul,dl)@{-}_{-}&2\ar@{-}[l]_{+}&3\ar@{-}[l]_{+}&4\ar@{-}[l]_{+}&              &                &7\ar@{-}[ul]_{+}\ar@{-}[dl]^{+}\\
                &               &                             &                &9\ar@{-}[ul]^{-}&8\ar@{-}[l]^{+}
}
\]
\[
\tiny
\xymatrix@R=.1in@C=.1in{
& & & & &+1(e_5-e_6)\ar@{.}[dr]\ar@{.}[dl]&\\
&         &           &           &+1(e_4-e_5) \ar@{-}[dd] & &+1(e_6-e_7)\ar@{.}[dd]\\
+1(-2e_1) \ar@{.}[r] & +2(e_1 - e_2) \ar@{.}[r]& +2(e_2 - e_3) \ar@{.}[r]& +2(e_3 - e_4) \ar@{.}[ur] \ar@{.}[dr]&       \\
&          &           &           &+1(e_4+e_{9}) & &+1(e_7-e_8)\\
& & & & &+1(e_9-e_8)\ar@{.}[ur]\ar@{.}[ul]&
}
\]
Note that the condition $1 \leq j \leq n$ on $C(n;1,j)$ allows for various degenerate instances. 
As examples, in the case $j=1$ we have the circuit $C(4;1,1)$
$$
\quad
\xymatrix@R=.5in@C=.5in{
1 \ar@(ul,dl)@{-}_{-}&2\ar@{-}[l]_{+}&3\ar@{-}[l]_{+}&4\ar@{-}[l]_{+} \ar@(ur,dr)@{-}^{-}
}
$$
with acuteness-obtuseness graph
$$
\small
\xymatrix@R=.2in@C=.2in{
+1(-2e_1)\ar@{.}[r] &
  +2(e_1 - e_2) \ar@{.}[r]&
  +2(e_2 - e_3) \ar@{.}[r]&
  +2(e_3 - e_4) \ar@{.}[r] &
  +1(+2e_1)
}
$$
and in the case $j=n$ we have the circuit $C(5;1,5)$:
$$
\xymatrix@R=.2in@C=.3in{
 & 2 \ar@{-}[r]^{+} & 3 \ar@{-}[dd]^{+} \\
1 \ar@(ul,dl)@{-}_{-} \ar@{-}[ur]^{+}  \ar@{-}[dr]_{-}&  &\\
 & 5 \ar@{-}[r]_{+} & 4
}
\qquad \qquad
\small
\xymatrix@R=.2in@C=.3in{
 & +1(e_1 - e_2) \ar@{.}[r]  \ar@{-}[dd]&+1(e_2 - e_3)\ar@{.}[dr]& \\
+1(-2e_1)\ar@{.}[ur] \ar@{.}[dr]& &             &+1(e_3 - e_4) \\
& +1(e_1 + e_5) \ar@{.}[r]&+1(e_4 - e_5)\ar@{.}[ur]& \\
}
$$

\begin{proposition}
\label{type-C-full-circuits-prop}
The set
$
\{ C(n;1,j): 1 \leq j \leq n\}
$
is a system of representatives for the $\Symm^\pm_n$-orbits of full circuits in $\Phi_{\type{C}_n}$.
\end{proposition}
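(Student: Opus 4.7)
The plan is to follow the same strategy as in the proof of Proposition~\ref{type-D-full-circuits-prop}, adapting Zaslavsky's signed-graph description to handle the long roots $\pm 2e_i$, which correspond to negative self-loops. First I would observe that any full circuit $C \subset \Phi_{\type{C}_n}$ must contain at least one long root: otherwise $C \subset \Phi_{\type{D}_n}$ and the associated reflections lie in the proper subgroup $W(\type{D}_n) \subsetneq \Symm^\pm_n$, contradicting fullness. Since a balanced cycle cannot contain a self-loop (self-loops are unbalanced), the extended form of Proposition~\ref{prop:Zaslavsky} forces $C$ to be of type (ii), i.e., two edge-disjoint unbalanced components joined either by a path or at a single vertex, with at least one component being a self-loop.

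Next I would split into two subcases: (a) exactly one component is a self-loop, paired with an ordinary unbalanced cycle of size $j \geq 2$, or (b) both components are self-loops, joined by a path. In either case, fullness forces the vertex set of $C$ to cover $\{1, 2, \ldots, n\}$. The normalization to $C(n;1,j)$ then proceeds by mimicking the four-step switching reduction from the proof of Proposition~\ref{type-D-full-circuits-prop}: push all minus edges off the connecting path via \eqref{switches-moving-minusses}, consolidate the minus edges in each ordinary unbalanced cycle into a single edge incident to the vertex of degree $\geq 3$, and apply a permutation in $\Symm_n$ to relabel vertices into the canonical ordering. No internal normalization of a self-loop is needed since $+2e_i$ and $-2e_i$ share the same reflecting hyperplane. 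In subcase (a) this produces $C(n;1,j)$ with $j \geq 2$, and in subcase (b) it produces $C(n;1,1)$, with self-loops at vertices $1$ and $n$ connected by a plus-edge path.

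Finally, I would verify that the $C(n;1,j)$ for distinct $j$ lie in distinct $\Symm^\pm_n$-orbits and that each is genuinely a full circuit. The orbit separation is immediate, as the unordered multiset $\{1, j\}$ of unbalanced-cycle sizes in the signed graph is invariant under both switching and permutation of vertex labels. Fullness is straightforward: the reflection through $-2e_1$ produces the sign-change $e_1 \mapsto -e_1$, and combined with the adjacent transpositions coming from the plus-edge path and the remaining cycle reflections, one generates all of $\Symm^\pm_n$. Unlike the type $\type{D}$ setting, no parity invariant $\pi(C)$ is needed, because the ambient group here is already the full hyperoctahedral group and there is no index-two subtlety. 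The main obstacle I anticipate is simply careful bookkeeping in the degenerate boundary cases (in particular $j=1$, where both components are self-loops, and $j=n$, where the self-loop and the ordinary cycle share a vertex), but the switching reductions of Proposition~\ref{type-D-full-circuits-prop} apply with only cosmetic changes.
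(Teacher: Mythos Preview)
Your proposal is correct and follows essentially the same route as the paper: rule out balanced cycles, force at least one self-loop by the $W(\type{D}_n)\subsetneq\Symm_n^\pm$ argument, then apply the same switching reductions from the type~$\type{D}$ proof to normalize to $C(n;1,j)$. Your explicit verification that distinct $j$ give distinct $\Symm_n^\pm$-orbits (via invariance of the multiset of unbalanced-cycle sizes) and that each $C(n;1,j)$ is genuinely full are details the paper leaves implicit, so if anything you are slightly more thorough.
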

\begin{proof}
As before, among the circuits described in Proposition~\ref{prop:Zaslavsky},
those of type (i) (balanced cycles) are never full circuits.  But now a circuit of type (ii),
having two unbalanced cycles connected by a path, is full
if and only if its set of vertices covers $\{1,2,\ldots,n\}$ \emph{and also}
 one of its balanced cycles has size one, i.e., is a self-loop.  
In this case, if its two disjoint cycles have sizes $1, j$, then we claim it is in the 
$\Symm_n^\pm$-orbit of $C(n;1,j)$.  To see this,
perform switchings as in type $\type{D}$ that push all of the minus edges off of the path in the middle and into the unbalanced cycle of size $j$, then  toward one end of this cycle, and cancel them in pairs until only one is left;  the result can then be relabeled by an element of $\Symm_n$ to give $C(n;1,j)$.
\end{proof}

Note that Proposition~\ref{type-C-full-circuits-prop} immediately implies that full circuits $C \subset \Phi_{\type{C}_n}$ have disconnected acuteness graph $\Gamma_C$, since $\Gamma_{C(n;1,j)}$ has at least three vertices but at most one edge.

\subsection{Exceptional types}
\label{exceptional-type-circuit-section}
We outline our {\tt Mathematica} computations verifying  Lemma~\ref{acuteness-lemma} in the exceptional types $\type{H}_3$, $\type{H}_4$, $\type{F}_4$, $\type{E}_6$, $\type{E}_7$, and $\type{E}_8$.
This data is attached as auxilliary data files named in a logical way; e.g., data for type $\type{E}_8$ is in the file \texttt{E8.txt}. 
We first generated a set of $W$-orbit representatives for all bases of positive roots in each root system $\Phi_W$. 
Given the list of $W$-orbit representatives for bases $B \subset \Phi_W^+$, we produced the $W$-orbit representatives for all circuits $C$ by adding to each $B$ a positive root $\alpha \in \Phi_W^+ \smallsetminus B$ in all
possible ways, finding the unique circuit $C \subset B \cup \{\alpha\}$, and classifying all such $C$ up to $W$-action.  Non-full circuits were discarded.
Finally, for each of these full circuits
$C$, we computed the acuteness graph $\Gamma_C$ and verified that it was disconnected.

The table below shows the number of orbits of bases and of full circuits in each of the exceptional types.
\begin{center}
\begin{tabular}{ c | c | c }
Type & \# orbits of bases & \# orbits of full circuits \\
\hline
$\type{H}_3$ &   11 &   15 \\ \hline
$\type{H}_4$ &   96 &  416 \\ \hline
$\type{F}_4$ &   35 &   22 \\ \hline
$\type{E}_6$ &   39 &   17 \\ \hline
$\type{E}_7$ &  311 &  142 \\ \hline
$\type{E}_8$ & 1943 & 1717
\end{tabular}
\end{center}
In the case of $\type{E}_8$, this computation required several days to produce the $1943$ $W$-orbits of bases in $\Phi_{\type{E}_8}^+$.  To corroborate this data, we also produced the sizes of the stabilizers of each $W$-orbit representative $B$; these allowed us to compare with the calculations of De Concini--Procesi \cite{DeConciniProcesi}, who found (e.g.)~that there are $348607121625$ total bases in $\Phi_{\type{E}_8}^+$.

\begin{example}
\label{Stembridge-irreducible-circuit-example}
Some of the full circuits that we encountered are
the irreducible circuits $C=\{\alpha\} \cup I$,
discussed in Remark~\ref{Stembridge-irreducible-circuit-remark} above.  
Stembridge shows %
that what he calls the {\it apex} vector $\alpha$
has $\langle \alpha, \beta \rangle > 0$ for each $\beta$ in $I$,
if the irreducible circuit $C$ comes from a dependence 
of the form $(-1)\alpha+\sum_{\beta \in I} c_\beta \beta = 0$ with
$c_\beta > 0$.  Therefore $\alpha$ always gives rise to a vertex of $\Gamma_C$
having an obtuse edge to every other vertex in the acuteness-obtuseness graph, 
and so becomes an isolated vertex of the acuteness graph $\Gamma_C$.
We depict here the acuteness-obtuseness graphs for the
irreducible circuits in type $\type{E}_6,\type{E}_7,\type{E}_8$,
adapted from his figure \cite[Fig.~1]{Stembridge},
where the apex $\alpha$ always appears in the center:
$$
\type{E}_6
\tiny
\xymatrix@R=.1in@C=.1in{
 &\alpha_2& &\alpha_3\ar@{-}[dr]&\\
\alpha_1\ar@{-}[ur]& &(-3)\alpha
\ar@{.}[rr]\ar@{.}[ur]\ar@{.}[ll]\ar@{.}[ul]\ar@{.}[dr]\ar@{.}[dl]
& &\alpha_4 \\
&\alpha_6& &\alpha_5\ar@{-}[ll]&
} \qquad \qquad
\type{E}_7
\tiny
\xymatrix@R=.1in@C=.1in{
 \alpha_1\ar@{-}[dd]\ar@{-}[dr]&              &+2\alpha_7&             &\alpha_4\ar@{-}[dd]\\
 &\alpha_2&(-4)\alpha
\ar@{.}[u] \ar@{.}[l] \ar@{.}[r] \ar@{.}[ull] \ar@{.}[urr] \ar@{.}[dll] \ar@{.}[drr]
    &\alpha_5 \ar@{-}[dr] \ar@{-}[ur]& \\
 \alpha_3\ar@{-}[ur]&              &                  &             &\alpha_6\ar@{-}[ul]
}
$$
\vskip.2in
$$
\type{E}_8
\tiny
\xymatrix@R=.1in@C=.05in{
 &+2\alpha_1 \ar@{-}[dl] & &\alpha_5 \ar@{-}[dddd] \ar@{-}[dr] \ar@{-}[dddr]& \\
\alpha_2  \ar@{-}[dd]& & & &\alpha_6 \ar@{-}[dd] \ar@{-}[dddl]\\
& &(-5)\alpha
\ar@{.}[uur] \ar@{.}[urr] \ar@{.}[drr] \ar@{.}[ddr] 
\ar@{.}[uul] \ar@{.}[ull] \ar@{.}[ddl]\ar@{.}[dll]
& & \\
\alpha_3& & & &\alpha_7\\
 &+2\alpha_4\ar@{-}[ul] & &\alpha_8\ar@{-}[ur]&
}
\qquad 
\type{E}_8
\tiny
\xymatrix@R=.1in@C=.05in{
& &              & \alpha_4\ar@{-}[dddd]& & &\\
\alpha_1\ar@{-}[dd]& &              & & &\alpha_5\ar@{-}[ull]& \\
&\alpha_2\ar@{-}[ul]&(-4)\alpha
\ar@{.}[l] \ar@{.}[ull] \ar@{.}[dll] 
\ar@{.}[uur] \ar@{.}[urrr] \ar@{.}[rrrr] \ar@{.}[drrr]\ar@{.}[ddr]
& & & &\alpha_6\ar@{-}[ul]\\
\alpha_3\ar@{-}[ur]& &              & & & \alpha_7\ar@{-}[ur]& \\
& &              &  \alpha_8\ar@{-}[urr]& & &
}
\qquad 
\type{E}_8
\tiny
\xymatrix@R=.1in@C=.05in{
           &+3 \alpha_1&              & \alpha_4
                                        \ar@{-}[dddd] 
                                        \ar@{-}[dddrrr] 
                                        \ar@{-}[ddrrrr] 
                                        \ar@{-}[drrr]   &&&            &        \\
           &           &              &                 &&&\alpha_5
                                                           \ar@{-}[dd]
                                                           \ar@{-}[dr] &        \\
           &           &(-6)\alpha
                        \ar@{.}[uul] 
                        \ar@{.}[ddl] 
                        \ar@{.}[dll] 
                        \ar@{.}[uur] 
                        \ar@{.}[urrr] 
                        \ar@{.}[rrrr] 
                        \ar@{.}[drrr]
                        \ar@{.}[ddr]  &                 &&&            &\alpha_6\\
+2\alpha_2
\ar@{-}[dr]&            &             &                 &&& \alpha_7
                                                           \ar@{-}[ur]&        \\
           &+2\alpha_3  &              & \alpha_8 
                                         \ar@{-}[uuurrr]
                                         \ar@{-}[urrr] 
                                         \ar@{-}[uurrrr]&&&            &
}
$$
\end{example}

\begin{example}
\label{favorite-full-circuits-remark}
 In most cases it is extremely easy to recognize the 
disconnectedness of $\Gamma_C$:  either it has an isolated vertex, or  it has $v$ vertices and fewer than $v-1$ edges, or both.  For example, 
in Stembridge's irreducible circuits $C=\{\alpha\} \cup I$, the apex vector
$\alpha$ necessarily gives rise to an isolated vertex of $\Gamma_C$.

Meanwhile in type $\type{H}_4$, of the $419$ full-rank circuit orbit 
representatives, there are only $25$ with at least four edges (two each with five or six edges, $21$ with four edges); of these, ten have an isolated vertex (including all of those with more than four edges) and the other fifteen consist of a disjoint triangle and edge.
\end{example}

\begin{example}
Here is another interesting example of an
acuteness-obtuseness graph of a full circuit in $\type{E}_8$:
\[
\xymatrix@R=.1in@C=.5in{
+2\alpha_1 \ar@{-}[dd]
  \ar@{.}[ddrrr]\ar@{.}[drrrr]\ar@{.}[dddrrrrr]\ar@{.}[ddddrrr]\ar@{.}[dddddrrrr]    
& & & & \\
& & & &\alpha_6\ar@{-}[ddr] \ar@{-}[dddl] \ar@{-}[dddd] &
\\
\alpha_2 \ar@{-}[dd] 
  \ar@{.}[rrr]\ar@{.}[urrrr]\ar@{.}[drrrrr]\ar@{.}[ddrrr]\ar@{.}[dddrrrr]   
  & & &\alpha_5\ar@{-}[ur]\ar@{-}[dd] \ar@{-}[dddr]\ar@{-}[drr]& & \\
           & & & & &\alpha_7\ar@{-}[dll] \\
\alpha_3 
  \ar@{.}[rrr]\ar@{.}[drrrr]\ar@{.}[urrrrr]\ar@{.}[uurrr]\ar@{.}[uuurrrr]   
    \ar@{-}[dd] & & &\alpha_9\ar@{-}[dr] &  & \\
  & & & &\alpha_8\ar@{-}[uur] & \\
+2\alpha_4 
  \ar@{.}[uurrr]\ar@{.}[urrrr]\ar@{.}[uuurrrrr]\ar@{.}[uuuurrr]\ar@{.}[uuuuurrrr]    
& & & &
}
\]
\end{example}

\section{Non-minimal factorizations, and
proofs of Lemma~\ref{main lemma} and Corollary~\ref{cor:standard form} 
}
\label{main lemma section}

Most of this section is devoted to the proof of the following lemma from the Introduction,
recalled here, which we then use to prove Corollary~\ref{cor:standard form}.

\vskip.1in
\noindent
{\bf Lemma~\ref{main lemma}.}
{\it
For any reflection factorization $\boldt = (t_1, \ldots, t_m)$ of
$w = t_1 \cdots t_m$ with $\ell_T(w) < m$,
either $m = 2$ or there exists $\boldt' = (t'_1, \ldots, t'_m)$ 
in the Hurwitz orbit of $\boldt$ with 
$\ell_T(t'_1 \cdots t'_k) < k$ for some $k \leq m-1$.
}
\vskip.1in
\noindent

An important tool will be Carter's characterization of
minimal reflection factorizations.

\begin{proposition}[{Carter \cite[Lem.~3]{Carter}}]
\label{Carter's lemma}
In a finite real reflection group $W$, one has
$\ell_T(s_{\alpha_1} \cdots s_{\alpha_k})=k$ if and only the roots 
$\alpha_1, \ldots, \alpha_k$ are linearly independent.
\end{proposition}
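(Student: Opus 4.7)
The plan is to prove Carter's Lemma by establishing the stronger dimension formula $\ell_T(w) = \dim M(w)$ for every $w \in W$, where $M(w) := (I-w)V$ is the \emph{moved space}. Since $W \subseteq O(V)$, one has $M(w) = (V^w)^\perp$, so $\dim M(w) = n - \dim V^w$, and $M(s_\alpha) = \Span(\alpha)$. Coupled with a direct computation of $\dim M(w)$ from a given reflection factorization, this formula implies Carter's statement.

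The easy half is the lower bound $\dim M(w) \leq \ell_T(w)$. From the identity $I - w_1 w_2 = (I - w_1) + w_1(I - w_2)$ one reads off the subadditivity $\dim M(w_1 w_2) \leq \dim M(w_1) + \dim M(w_2)$; iterating gives the bound. The same identity iterated explicitly along $w = s_{\alpha_1} \cdots s_{\alpha_k}$ yields
\[
(I-w)(v) \;=\; \sum_{j=1}^{k} \frac{2\langle v, \alpha_j\rangle}{|\alpha_j|^2}\, \beta_j,
\]
where $\beta_j := s_{\alpha_1} \cdots s_{\alpha_{j-1}}(\alpha_j)$. A quick induction on $j$ shows $\beta_j = \alpha_j + (\text{linear combination of } \alpha_1, \ldots, \alpha_{j-1})$, so the change-of-basis matrix between the two lists is unitriangular; consequently they span the same subspace and are linearly independent simultaneously. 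Thus $M(w) \subseteq \Span(\alpha_1, \ldots, \alpha_k)$, and when the $\alpha_j$ are linearly independent, $w(v) = v$ forces every $\langle v, \alpha_j\rangle$ to vanish, giving $V^w = \bigcap_j \alpha_j^\perp$ and $\dim M(w) = k$. Combined with the trivial $\ell_T(w) \leq k$, this already proves ``$\alpha_j$ independent $\Rightarrow \ell_T(w) = k$.''

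The main obstacle is the reverse inequality $\ell_T(w) \leq \dim M(w)$, which I would prove by induction on the rank $n$ of $W$. If $V^w \neq 0$, Steinberg's fixed-point theorem identifies the pointwise stabilizer $W_{V^w}$ as the reflection subgroup generated by $\{s_\alpha : \alpha \in \Phi \cap M(w)\}$; this is a reflection group of rank $\dim M(w) < n$ acting effectively on $M(w)$, and it contains $w$. The inductive hypothesis then yields $\ell_{T(W_{V^w})}(w) \leq \dim M(w)$, which dominates $\ell_T(w)$ since the reflections of $W_{V^w}$ are reflections of $W$. The remaining case is the elliptic one, $V^w = 0$, where one must show $\ell_T(w) \leq n$; this is the classical fact that every element of a finite real reflection group of rank $n$ is a product of at most $n$ reflections, which can be obtained by peeling off a suitable reflection $s_\alpha$ so that $s_\alpha w$ acquires a nonzero fixed vector (using that $(I-w)$ is invertible on $V$ to solve the requisite fixed-point equation for some root $\alpha \in \Phi$), thereby reducing to the nonelliptic case already handled. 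With the equality $\ell_T(w) = \dim M(w)$ in hand, the converse of Carter's lemma follows: if $\alpha_1, \ldots, \alpha_k$ are dependent, then $\dim M(w) \leq \dim \Span(\alpha_1, \ldots, \alpha_k) < k$, so $\ell_T(w) < k$.
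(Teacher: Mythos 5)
The paper does not actually prove this proposition---it is quoted verbatim from Carter \cite[Lem.~3]{Carter} and used as a black box---so there is no internal proof to compare against. Your argument is correct and is essentially the classical one: the telescoping identity $I - w_1w_2 = (I-w_1) + w_1(I-w_2)$, the resulting formula $(I-w)(v) = \sum_j \frac{2\langle v,\alpha_j\rangle}{|\alpha_j|^2}\beta_j$ with $\beta_j = s_{\alpha_1}\cdots s_{\alpha_{j-1}}(\alpha_j)$, and the unitriangular change of basis between the $\alpha_j$ and $\beta_j$ are exactly the content of Carter's Lemmas~1--2, and they correctly give both $\dim M(w) \leq \ell_T(w)$ and the forward implication. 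For the reverse inequality $\ell_T(w) \leq \dim M(w)$ you substitute Steinberg's fixed-point theorem for Carter's more hands-on induction; this is a legitimate and arguably cleaner route, since it immediately produces a lower-rank reflection subgroup containing $w$ whose reflections lie in $T$. The one place you should expand is the elliptic step: solving $(w-I)v_0 = \alpha$ for a root $\alpha$ does not by itself say that $v_0$ is fixed by $s_\alpha w$; one must also check that the reflection coefficient works out, which follows from orthogonality of $w$ via $|w(v_0)|^2 = |v_0|^2$, giving $\langle v_0,\alpha\rangle = -\tfrac{1}{2}|\alpha|^2$ and hence $\langle w(v_0),\alpha\rangle = \tfrac{1}{2}|\alpha|^2$, so that $s_\alpha(w(v_0)) = w(v_0) - \alpha = v_0$. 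With that computation included, the proof is complete.
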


\noindent
In particular, this implies that the reflection length function 
$\ell_T: W \rightarrow \NN$ only takes values in $\{0,1,\ldots,\dim(V)\}$.
A second important observation is the following.

\begin{proposition}
\label{prefix-prop}
A subsequence $(t_{i_1},\ldots,t_{i_k})$ with 
$1 \leq i_1 < \ldots < i_k \leq m$ of 
a factorization $\boldt=(t_1,\ldots,t_m)$ of $w=t_1 t_2 \cdots t_m$ 
is always a prefix for some 
$
\boldt'=(t_{i_1},\ldots,t_{i_k},t'_{k+1},t'_{k+2},\ldots,t'_m)
$ 
in the Hurwitz orbit of $\boldt$.
\end{proposition}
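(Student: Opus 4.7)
The plan is to produce the desired $\boldt'$ by iteratively pulling each chosen factor leftward using Hurwitz moves that preserve the factor being moved. The key algebraic observation is that the Hurwitz move $\sigma_i$ acts by $(a,b) \mapsto (b, a^b)$, so the \emph{right} entry $b$ is carried over unchanged to the left position, while only the displaced left entry $a$ is altered (by conjugation). Thus if at some stage the element $t_{i_j}$ sits at position $p > j$, applying $\sigma_{p-1}$ leaves $t_{i_j}$ itself at position $p-1$, replacing what was at position $p-1$ by its $t_{i_j}$-conjugate; no other positions change.

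My plan is to proceed by induction on $k$, moving one subsequence entry into place at a time. First, I apply the composition $\sigma_1 \sigma_2 \cdots \sigma_{i_1 - 1}$ (in the appropriate order, successively applying $\sigma_{i_1 - 1}, \sigma_{i_1 - 2}, \ldots, \sigma_1$) to transport $t_{i_1}$ from position $i_1$ down to position $1$. By the observation above, after this step the entry in position $1$ is exactly $t_{i_1}$ (not a conjugate), the entries in positions $2, \ldots, i_1$ are conjugates of the original $t_1, \ldots, t_{i_1 - 1}$, and all entries in positions $i_1 + 1, \ldots, m$ are untouched. In particular, $t_{i_2}$ still literally sits at position $i_2$.

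Now I repeat the process for $t_{i_2}$, using the moves $\sigma_{i_2 - 1}, \sigma_{i_2 - 2}, \ldots, \sigma_2$ to slide $t_{i_2}$ from position $i_2$ down to position $2$. These moves act only on positions $\geq 2$, so the $t_{i_1}$ already installed in position $1$ is not disturbed. Iterating this procedure for $j = 3, 4, \ldots, k$ installs $t_{i_j}$ at position $j$ while leaving the previously-placed $t_{i_1}, \ldots, t_{i_{j-1}}$ fixed and leaving $t_{i_{j+1}}, \ldots, t_{i_k}$ unchanged at their original positions. After $k$ rounds we have a Hurwitz-equivalent factorization of the form $(t_{i_1}, \ldots, t_{i_k}, t'_{k+1}, \ldots, t'_m)$, as required. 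There is no serious obstacle here: the proposition is a direct unwinding of the definition of the Hurwitz action, and the only thing to check is the bookkeeping that the moves used for the $j$-th stage act only on positions $\geq j$ so as not to disturb the earlier installed prefix.
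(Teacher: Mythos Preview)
Your proof is correct and follows essentially the same approach as the paper: successively apply $\sigma_{i_1-1},\sigma_{i_1-2},\ldots,\sigma_1$ to bring $t_{i_1}$ to the first position, then $\sigma_{i_2-1},\ldots,\sigma_2$ to bring $t_{i_2}$ to the second position, and so on. Your write-up is somewhat more explicit than the paper's in spelling out why the moved factor is preserved unchanged and why the later $t_{i_j}$ and earlier installed prefix entries are undisturbed, but the underlying argument is identical.
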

\begin{proof}
Starting with $\boldt$, 
apply $\sigma_{i_1-1}, \sigma_{i_1-2}, \ldots, \sigma_2, \sigma_1$
to move the $t_{i_1}$ to the first position; then similarly 
apply $\sigma_{i_2-1}, \sigma_{i_2-2}, \ldots, \sigma_3, \sigma_2$ to move
$t_{i_2}$ to the second position, and so on.
\end{proof}

Using Propositions~\ref{Carter's lemma} and \ref{prefix-prop}
in the context of Lemma~\ref{main lemma}, one
can assume without loss of generality that the
reflection factorization $\boldt = (t_1, \ldots, t_m)$ of
$w = t_1 \cdots t_m$ with $\ell_T(w) <m$
corresponds via $t_i = s_{\alpha_i}$ to a sequence of roots 
$\alpha_1,\alpha_2,\ldots,\alpha_m$ that form a circuit
$C=\{\alpha_1,\ldots,\alpha_m\} \subset \Phi$.
Furthermore, as in Section~\ref{section: acuteness},
one can also assume that $\Phi_W$ is irreducible,
and that $C$ is a full circuit in $\Phi_W$.

Note that Lemma~\ref{main lemma} can be checked trivially in rank $1$, 
since $W=\langle s: s^2=e \rangle$.  The next subsection
deals with rank $2$, and the one following deals with
ranks $3$ and higher, relying ultimately on Lemma~\ref{acuteness-lemma}.

\subsection{Rank 2}
\label{rank-two-standard-form-section}

Lemma~\ref{main lemma} is already interesting in rank $2$,
so that $W$ is the dihedral group
$$
W=W_m:=\langle s,t: s^2=t^2=e, (st)^m=e \rangle
$$
of type $\type{I}_2(m)$.  Since full circuits $C$ have size $3$,
the reductions above show that, to prove Lemma~\ref{main lemma},
it only remains to check the following assertion:
any reflection
factorization $\boldt =(t_1, t_2, t_3)$ of $w=t_1 t_2 t_3$ in $W_m$
has a factorization of the form $\boldt'=(t', t', t'')$
in its Hurwitz orbit.
In fact, one need only prove this same assertion for the
\defn{infinite dihedral group} 
\[
W_\infty=\langle s, t \mid s^2 = t^2 = e\rangle
\]  
of type $\type{I}_2(\infty)$.  The reflections in $W_\infty$ are the elements of odd length in the generating set $s, t$; we denote them by $
T=\{
t(n):= (st)^n s, n \in \ZZ
\}
$.  (In particular, this means $s = t(0)$ and $t = t(-1)$.)  The obvious surjection $W_\infty \twoheadrightarrow W_m$ 
\begin{compactitem}
\item
carries reflections in $W_\infty$ to reflections in $W_m$, and
\item
sends Hurwitz moves $\sigma_i$ on factorizations in $W_\infty$
as in \eqref{Hurwitz-move-definition} to the same  Hurwitz move in $W_m$.
\end{compactitem}

There is a standard geometric model for $W_\infty$ as generated by
{\it affine reflections} of the real line $\RR$: the reflection $t(n)$ reflects $\RR$ across the point $x=n$ in $\RR$.
Thus the conjugation action 
\[
t(b)^{t(a)} = t(a) \cdot t(b) \cdot t(a) = t(2a - b)
\]
corresponds to reflecting $b$ across $a$ on the line $\RR$.
Therefore, bearing in mind Proposition~\ref{prefix-prop},
it will suffice to show that, given an ordered 
triple of integers $(a,b,c)$,
one can eventually reach a triple having two of the integers equal
via moves that reflect one of $a,b$ across the other and swapping
their positions within the triple,
or doing the same with $b,c$.
We give an algorithm that does this by reflecting one of the three
values $a,b,c$ across the median value, proceeding by induction on the
(positive integer) length 
$$
M(a,b,c):=\max\{a,b,c\}-\min\{a,b,c\}
$$
of the interval that they span, and eventually making two of them coincide.

Up to the irrelevant symmetries $(a,b,c) \mapsto (c,b,a)$ and
$(a,b,c) \mapsto (-a,-b,-c)$ 
(the latter being achieved by reflection across $0$),
we may suppose that either $a \leq b \leq c$ or $a \leq c < b$.  
In the latter case, reflecting $b$ across $a$ produces  
$(2a-b,a,c)$ with $2a - b < a \leq c$, so we reduce to the former case.
Since $a \leq b \leq c$, one has $M=c-a$.  Let $m:=\min\{b-a, c-b\}$.
Without loss of generality, $M(a,b,c) > m > 0$, else we are done.
If $m=b-a$, reflect $a$ across $b$ giving
$(a',b',c')=(b,2a-b,c)$ with 
$
M(a',b',c')=c-b\leq m<M(a,b,c).
$
If $m=c-b$, reflect $c$ across $b$ giving $(a',b',c')=(a,2b-c,b)$ with 
$
M(a',b',c')=b-a \leq m<M(a,b,c).
$
In either case, we are done by induction. 

Here is an illustration in the case $(a, b, c) = (3, 7, 5)$ 
(so that initially $a<c<b$):
\begin{figure}[h]
  \include{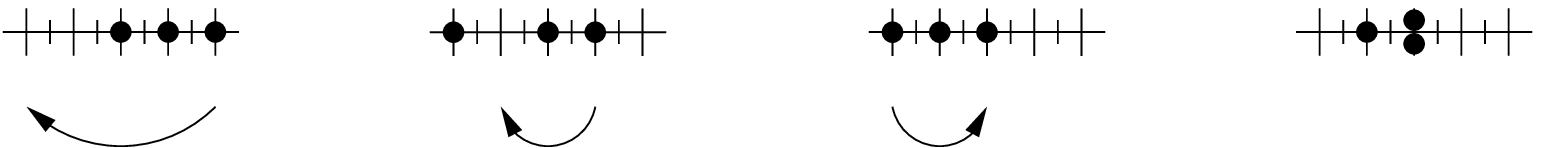} 
\end{figure}

\subsection{Higher ranks}

When $W$ has rank at least three, 
we require a somewhat more subtle argument to prove Lemma~\ref{main lemma}.
Given a factorization $w=t_1 t_2 \cdots t_m$ in which $\ell_T(w) < m$,
there exists an $m$-tuple $(\alpha_1, \ldots,\alpha_m)$ of roots 
for which $t_i =s_{\alpha_i}$, and by Proposition~\ref{Carter's lemma} 
this $m$-tuple is linearly dependent.

\begin{definition}
A pair $(\CCC,\ccc)$ where
$\CCC=(\alpha_1,\ldots,\alpha_m)$ in $\Phi^m$ 
and $\ccc=(c_1,\ldots,c_m)$ in $\RR^m$ with  
$\sum_{i=1}^m c_i \alpha_i=0$ will be called an \defn{$m$-dependence} in $\Phi$.
Its \defn{weight} is defined as
$$
\wt(\CCC,\ccc):=\wt(\ccc):=\sum_{i=1}^m |c_i|.
$$
\end{definition}

Our proof strategy for Lemma~\ref{main lemma} is to start with any
nontrivial $m$-dependence $(\CCC,\ccc)$ that accompanies
a non-minimal factorization $w=t_1 t_2 \cdots t_m$, 
and try to apply Hurwitz moves that make $\wt(\CCC,\ccc)$ strictly
smaller.  Then we work by induction to show that for $m > 2$, every $m$-dependence has in its Hurwitz orbit an $m$-dependence
$(\CCC',\ccc')$ where one of the coefficients $c'_i$ {\it vanishes}, so
that a proper subset of the vectors in $\CCC$ is dependent.  
Bearing in mind Proposition~\ref{prefix-prop}, this would prove Lemma~\ref{main lemma}.  There are at least three separate issues here:
\begin{compactenum}[(i)]
\item We need a well-defined Hurwitz action on the set of $m$-dependences (easy---see Proposition~\ref{Hurwitz-action-on-dependences}).
\item We need to know that some Hurwitz move applies that {\it lowers} 
$\wt(\CCC,\ccc)$.  Here we use Lemma~\ref{acuteness-lemma}.
\item We need to know that one cannot lower
$\wt(\CCC,\ccc)$ infinitely often. This is a fairly easy argument
in the crystallographic case, but requires one further computation 
in types $\type{H}_3, \type{H}_4$.
\end{compactenum}
We deal with issues (i), (ii), (iii) in the next three subsections.

\subsubsection{Dealing with issue (i).}

We lift Hurwitz moves on reflection factorizations to moves on $m$-dependences.

\begin{proposition}
\label{Hurwitz-action-on-dependences}
The Hurwitz move 
$\boldt \overset{\sigma_i}{\longmapsto} \boldt'$
of \eqref{Hurwitz-move-definition}
lifts to the following (invertible) Hurwitz move $\sigma_i$ on the set of
$m$-dependences in $\Phi$:  given
$(\CCC=(\alpha_i)_{i=1}^m,\ccc)$ corresponding to $\boldt$, send it to 
$(\CCC'=(\alpha'_i)_{i=1}^m,\ccc')$ having 
$\alpha'_j = \alpha_j$ and $c'_j=c_j$ for all $j \neq i,i+1$, and
\begin{equation}
\label{braid generator acting on weighted circuits}
\begin{pmatrix}
 \alpha_i &\alpha_{i+1} \\
 & \\
c_i       & c_{i + 1} 
\end{pmatrix} \\
\overset{\sigma_i}{\longmapsto}
\begin{pmatrix}
 \alpha_{i + 1}                                                                    & s_{\alpha_{i + 1}}(\alpha_i) \\
 & \\
 c_{i + 1} + \displaystyle\frac{2\langle\alpha_i, \alpha_{i + 1}\rangle}{|\alpha_{i + 1}|^2} c_i & c_{i} 
\end{pmatrix}.
\end{equation}
Furthermore, the $i$th sign change involution $\epsilon_i$ 
on $(\CCC,\ccc)$ that  replaces 
$\alpha_i \mapsto -\alpha_i$
 and $c_i \mapsto -c_i$ satisfies 
\begin{equation}
\label{sign-change-Hurwitz-interaction}
\begin{aligned}
\sigma_i \epsilon_j &= \epsilon_j \sigma_i &&\text{ for } j \neq i,i+1,\\
\sigma_i \epsilon_i&=\epsilon_{i+1}\sigma_i, &&\text{ and }\\
\sigma_i \epsilon_{i+1}&=\epsilon_i\sigma_i.
\end{aligned}
\end{equation}
\end{proposition}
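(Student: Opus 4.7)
The plan is to verify three things in turn: (a) that the formula~\eqref{braid generator acting on weighted circuits} indeed produces a valid $m$-dependence; (b) that the underlying Hurwitz move on reflection factorizations is respected, and that the resulting map on $m$-dependences is invertible; and (c) that the commutation relations~\eqref{sign-change-Hurwitz-interaction} hold. All three reduce to direct computations using the reflection formula $s_\beta(\alpha) = \alpha - \frac{2\langle \alpha,\beta\rangle}{|\beta|^2}\beta$, so no new ideas beyond those in Section~\ref{background section} are required.

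For (a), I would write the contribution from positions $i,i+1$ after the move as
\[
\left(c_{i+1} + \tfrac{2\langle\alpha_i,\alpha_{i+1}\rangle}{|\alpha_{i+1}|^2}c_i\right)\alpha_{i+1} + c_i\, s_{\alpha_{i+1}}(\alpha_i),
\]
expand $s_{\alpha_{i+1}}(\alpha_i)$ using the reflection formula, and check that the cross terms cancel and the expression simplifies to $c_i\alpha_i + c_{i+1}\alpha_{i+1}$; since the other summands are untouched, the dependence $\sum c'_j\alpha'_j = 0$ persists. For the first part of (b), the standard identity $s_{\alpha_i}^{s_{\alpha_{i+1}}} = s_{s_{\alpha_{i+1}}(\alpha_i)}$ shows that the reflections determined by the new roots match the Hurwitz image $(t_{i+1}, t_i^{t_{i+1}})$ on factorizations. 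Invertibility is then immediate: the root-level map $(\alpha_i,\alpha_{i+1})\mapsto(\alpha_{i+1}, s_{\alpha_{i+1}}(\alpha_i))$ is inverted by $(\beta_1,\beta_2)\mapsto(s_{\beta_1}(\beta_2), \beta_1)$, and the coefficient update is a linear transformation with the evidently invertible matrix $\begin{pmatrix}\tfrac{2\langle\alpha_i,\alpha_{i+1}\rangle}{|\alpha_{i+1}|^2} & 1 \\ 1 & 0\end{pmatrix}$.

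For (c), the commutations with $\epsilon_j$ for $j\notin\{i,i+1\}$ are immediate since those operations act on disjoint positions. To check $\sigma_i\epsilon_i = \epsilon_{i+1}\sigma_i$, I would apply both sides to $(\alpha_i,\alpha_{i+1};\,c_i,c_{i+1})$ and compare: applying $\epsilon_i$ first flips the signs of both $\alpha_i$ and $c_i$, so that the cross-term $\tfrac{2\langle-\alpha_i,\alpha_{i+1}\rangle}{|\alpha_{i+1}|^2}(-c_i)$ reproduces the original value, while $s_{\alpha_{i+1}}(-\alpha_i) = -s_{\alpha_{i+1}}(\alpha_i)$ together with the new coefficient $-c_i$ in position $i+1$ matches what $\epsilon_{i+1}\circ\sigma_i$ produces. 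The relation $\sigma_i\epsilon_{i+1} = \epsilon_i\sigma_i$ is analogous, using $s_{-\alpha_{i+1}} = s_{\alpha_{i+1}}$ so that the reflection through the hyperplane is unaffected by negating its normal, while the coefficient in position $i$ picks up the appropriate overall sign. The only obstacle throughout is clerical sign-tracking in (c); there is no conceptual difficulty, and the root system's structure plays no role beyond the basic reflection identities.
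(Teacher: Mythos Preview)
Your proposal is correct and follows essentially the same approach as the paper: both verify that the new pair is an $m$-dependence via the reflection formula, confirm compatibility with the Hurwitz move on factorizations using $s_{\alpha_i}^{s_{\alpha_{i+1}}} = s_{s_{\alpha_{i+1}}(\alpha_i)}$, exhibit invertibility (the paper writes out an explicit formula for $\sigma_i^{-1}$ rather than invoking the coefficient matrix), and dismiss the sign-change relations as straightforward checks. If anything, you are more explicit about the computations in (a) and (c) than the paper is.
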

\begin{proof}
For any root $\alpha$ and any $w$ in $W$
one has
$
s_{\alpha}^{w} = w^{-1} s_{\alpha} w = s_{w^{-1}(\alpha)}
$.  Applying this with $\alpha=\alpha_i$ and $w = s_{\alpha_{i+1}} = w^{-1}$ shows that the pair
$(\CCC',\ccc')$ defined in the statement corresponds to $\boldt'=\sigma_i(\boldt)$. 
The fact that this pair is another $m$-dependence 
comes from $\sum_{i=1}^m c_i \alpha_i=0$ and a calculation with the formula~\eqref{reflection formula}.
The inverse $\sigma_i^{-1}:(\CCC,\ccc) \longmapsto (\CCC',\ccc')$
has the following formula:  
$\alpha'_j = \alpha_j$ and $c'_j=c_j$ for all $j \neq i,i+1$, and
\begin{equation}
\label{inverse braid generator acting on weighted circuits}
\begin{pmatrix}
 \alpha_i &\alpha_{i+1} \\
 & \\
c_i       & c_{i + 1} 
\end{pmatrix} \\
\overset{\sigma_i^{-1}}{\longmapsto}
\begin{pmatrix}
 s_{\alpha_i}(\alpha_{i + 1})                                                                    & \alpha_i \\
 & \\
 c_{i+1} & c_{i} + \displaystyle\frac{2\langle\alpha_{i+1}, \alpha_{i}\rangle}{|\alpha_{i}|^2} c_{i+1}  
\end{pmatrix}.
\end{equation}
The relations in \eqref{sign-change-Hurwitz-interaction} are all straightforward
to check.
\end{proof}

\begin{remark}
We will not need it here, but a slightly laborious calculation shows that 
the permutation action of the operators $\sigma_i$ on
the set of $m$-dependences in $\Phi$ satisfies the usual braid relations,
giving an action of the $m$-strand braid group on the set of $m$-dependences.
\end{remark}

\subsubsection{Dealing with issue (ii).}

We begin by studying how the two Hurwitz moves
$\sigma_i, \sigma_i^{-1}$ affect the weight of a dependence.

\begin{proposition}
\label{proposition: obtuse angles are good}
Consider a nontrivial $m$-dependence $(\CCC,\ccc)$ for $m \geq 3$,
with $\CCC=(\alpha_1,\ldots,\alpha_m)$ supported on a circuit $C = \{\alpha_1,\ldots,\alpha_m\}$.
\begin{compactenum}[(a)]
\item
If $\langle c_i \alpha_i, c_{i+1} \alpha_{i+1} \rangle =0$,
then $\wt(\sigma_i(\CCC,\ccc)) = \wt(\sigma_i^{-1}(\CCC,\ccc)) = \wt(\CCC,\ccc)$.
\item
If $\langle c_i \alpha_i, c_{i+1} \alpha_{i+1} \rangle >0$,
then both $\wt(\sigma_i(\CCC,\ccc)),\wt(\sigma_i^{-1}(\CCC,\ccc)) 
> \wt(\CCC,\ccc)$.
\item
If $\langle c_i\alpha_i, c_{i+1}\alpha_{i+1}\rangle <0$,
then either $\wt(\sigma_i(\CCC,\ccc))<\wt(\CCC,\ccc)$
or $\wt(\sigma_i^{-1}(\CCC,\ccc)) <\wt(\CCC,\ccc).$
\end{compactenum}
\end{proposition}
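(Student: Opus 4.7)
The plan is to reduce everything to a two-variable calculation, since under the Hurwitz move $\sigma_i$ only the $i$-th and $(i+1)$-st coefficients of $\ccc$ change. Writing $a := c_i$, $b := c_{i+1}$, $\lambda := 2\langle \alpha_i, \alpha_{i+1}\rangle / |\alpha_{i+1}|^2$, and $\mu := 2\langle \alpha_{i+1}, \alpha_i\rangle / |\alpha_i|^2$, the formulas \eqref{braid generator acting on weighted circuits} and \eqref{inverse braid generator acting on weighted circuits} show that applying $\sigma_i$ changes the weight by $|b+\lambda a|-|b|$ and applying $\sigma_i^{-1}$ changes it by $|a+\mu b|-|a|$. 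All three parts then reduce to understanding these one-variable expressions.

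Before splitting into cases, I would record two preparatory facts: (i) since $C$ is a circuit, the minimality of the dependence forces $a, b \neq 0$; and (ii) since $m \geq 3$, the roots $\alpha_i$ and $\alpha_{i+1}$ are linearly independent (else $\{\alpha_i,\alpha_{i+1}\}$ would be a proper dependent subset of $C$). These two facts together yield the key identity
\[
\lambda\mu \;=\; \frac{4\langle \alpha_i,\alpha_{i+1}\rangle^2}{|\alpha_i|^2|\alpha_{i+1}|^2} \;=\; 4\cos^2(\theta) \;<\; 4,
\]
where $\theta$ is the angle between $\alpha_i$ and $\alpha_{i+1}$. This strict inequality is what drives part (c).

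For part (a), the hypothesis $\langle c_i\alpha_i, c_{i+1}\alpha_{i+1}\rangle = 0$ together with $a,b\neq 0$ forces $\lambda = \mu = 0$, so both $\sigma_i^{\pm 1}$ merely swap the entries and preserve weight. For part (b), a direct sign check shows that $\lambda a$ and $b$ have the same (nonzero) sign: the product of their signs equals $\operatorname{sgn}(\langle c_i\alpha_i, c_{i+1}\alpha_{i+1}\rangle) > 0$. Hence $|b+\lambda a| = |b|+|\lambda a| > |b|$, and symmetrically $|a+\mu b| > |a|$.

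For part (c), the opposite signs of $\lambda a$ and $b$ give $|b+\lambda a| = \bigl||b|-|\lambda a|\bigr|$, so $\sigma_i$ strictly decreases the weight iff $0 < |\lambda a| < 2|b|$, i.e. iff $|\lambda|\,|a|/|b| < 2$; analogously $\sigma_i^{-1}$ strictly decreases the weight iff $|\mu|\,|b|/|a| < 2$. The product of these two quantities is $|\lambda\mu| = 4\cos^2(\theta) < 4$ by the key identity above, so at least one of them is strictly less than $2$, and at least one move strictly decreases the weight. No real obstacle arises in this argument; the only delicate point is the sign bookkeeping in (b) and (c), where one must track that the sign of $\lambda$ follows the sign of $\langle\alpha_i,\alpha_{i+1}\rangle$ to correctly identify $\operatorname{sgn}(\lambda a)\operatorname{sgn}(b)$ with $\operatorname{sgn}(\langle c_i\alpha_i, c_{i+1}\alpha_{i+1}\rangle)$.
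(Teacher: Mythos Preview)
Your proposal is correct and follows essentially the same approach as the paper's proof. The only cosmetic differences are that the paper first normalizes to the case where all $c_j>0$ via the sign-change involutions $\epsilon_j$ (which spares it your sign bookkeeping), and in part (c) it argues by contradiction invoking Cauchy--Schwarz rather than your direct formulation ``the product $|\lambda\mu|=4\cos^2\theta<4$, so one factor is $<2$''; these are the same inequality.
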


\begin{proof}
Since $C$ is a circuit, all entries of $\ccc$ are nonzero.
Assertion (a) follows because $c_i,c_{i+1}\neq 0$ imply
$\langle \alpha_i, \alpha_{i+1} \rangle=0$, so that $\sigma_i^{\pm 1}$ simply permute the coefficients.

In arguing assertions (b), (c), it is convenient to have all entries $c_j > 0$
in $\ccc$.  One can reduce to this case by applying
sign change operations $\epsilon_i$ that negate some of the $\alpha_j$,
using the relations \eqref{sign-change-Hurwitz-interaction}.

Then from \eqref{braid generator acting on weighted circuits}, one has
\begin{equation}
\label{weight-change}
\wt(\sigma_i (\CCC,\ccc)) - 
\wt(\CCC,\ccc) =c'_i-c_{i+1}
\quad \text{ where } \quad
c'_i: =
\left|c_{i + 1} + 
 \frac{2\langle \alpha_i, \alpha_{i + 1}\rangle}
       { \lvert \alpha_{i + 1} \rvert^2} c_{i}\right|.
\end{equation}

For assertion (b), note that $c_i, c_{i+1}>0$ implies that
$\langle \alpha_i, \alpha_{i + 1}\rangle > 0$, and hence 
$$
c'_i -  c_{i + 1} =
 \frac{2\langle \alpha_i, \alpha_{i + 1}\rangle}
       { \lvert \alpha_{i + 1} \rvert^2} c_{i} 
> 0,  
$$
so that  
$\wt(\sigma_i (\CCC,\ccc)) > \wt(\CCC,\ccc)$.  Moreover the same holds when 
$\sigma_i$ is replaced by $\sigma_{i}^{-1}$, since this only has the effect of switching 
$i$ and $i + 1$ everywhere in \eqref{weight-change}.

For assertion (c), let us assume that
$\langle c_i\alpha_i, c_{i+1}\alpha_{i+1}\rangle <0$ and that both
\begin{equation}
\label{contrapositive-inequality-assumption}
\wt(\sigma_i (\CCC,\ccc)) \geq  \wt(\CCC,\ccc) 
\quad \textrm{ and } \quad 
\wt(\sigma^{-1}_i (\CCC,\ccc)) \geq \wt(\CCC,\ccc),
\end{equation}
in order to reach a contradiction.  Note that
\begin{align*}
\wt(\sigma_i (\CCC,\ccc)) \geq \wt(\CCC,\ccc)
&\quad \Longleftrightarrow \quad
\left|c_{i + 1} + \frac{2\langle \alpha_i, \alpha_{i + 1}\rangle}{|\alpha_{i + 1}|^2} c_{i}\right|
 \geq c_{i + 1}\\
&\quad \Longleftrightarrow \quad 
\left|\frac{2\langle \alpha_i, \alpha_{i + 1}\rangle}{|\alpha_{i + 1}|^2}\right| c_i
 \geq 2 c_{i + 1}
& (\text{since }c_i, c_{i + 1} > 0\text{ and }\langle \alpha_i, \alpha_j \rangle < 0),\\
&\quad \Longleftrightarrow \quad
\left|\frac{\langle \alpha_i, \alpha_{i + 1}\rangle}{|\alpha_{i + 1}|^2}\right| 
 \geq \frac{c_{i + 1}}{c_i}.
\end{align*}
Similarly, swapping the indices $i, i+1$, one has
\[
\wt(\sigma^{-1}_i (\CCC,\ccc)) \geq \wt(\CCC,\ccc)
\quad \Longleftrightarrow \quad 
\left|\frac{\langle \alpha_i, \alpha_{i + 1}\rangle}{|\alpha_{i}|^2}\right| 
\geq
\frac{c_{i}}{c_{i + 1}}.
\]
Therefore the assumption \eqref{contrapositive-inequality-assumption} implies that
\[
\left(\frac{\langle \alpha_i, \alpha_{i+1}\rangle}{|\alpha_i|\cdot|\alpha_{i+1}|}\right)^2
=
\left|\frac{\langle \alpha_i, \alpha_{i + 1}\rangle}{|\alpha_{i + 1}|^2}\right| 
\cdot \left|\frac{\langle \alpha_i, \alpha_{i + 1}\rangle}{|\alpha_{i}|^2}\right| 
\geq
 \frac{c_{i + 1}}{c_i} \cdot
\frac{c_{i}}{c_{i + 1}}
=
1.
\]
Cauchy-Schwarz then forces $\alpha_{i+1}=\pm \alpha_i$,
contradicting $\CCC=(\alpha_1,\ldots,\alpha_m)$ being a circuit with $m \geq 3$.
\end{proof}

With this in hand, issue (ii) is dealt with by the following result.

\begin{proposition}
\label{proposition:acuteness-do-not-obstruct}
Given a nontrivial $m$-dependence $(\CCC,\ccc)$ for $m \geq 3$,
with $\CCC=(\alpha_1,\ldots,\alpha_m)$ 
supported on a full circuit $C = \{\alpha_1,\ldots,\alpha_m\} \subset \Phi$,
there exists another $m$-dependence $(\CCC', \ccc')$ in its Hurwitz orbit
such that $\wt(\ccc') < \wt(\ccc)$.
\end{proposition}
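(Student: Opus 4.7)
The plan is to combine Lemma~\ref{acuteness-lemma} with the weight-change formulas of Proposition~\ref{proposition: obtuse angles are good} to exhibit a sequence of Hurwitz moves that strictly decreases $\wt(\CCC,\ccc)$. The argument has three steps: sign-normalize, locate a strictly obtuse cross-partition pair, and bring that pair to adjacent positions for a final application of part (c) of Proposition~\ref{proposition: obtuse angles are good}.

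First, I apply a product of sign-change operators $\epsilon_i$ to reduce to the case in which every coefficient $c_i > 0$. These operators preserve $\wt(\ccc)$ and, by the relations in \eqref{sign-change-Hurwitz-interaction}, intertwine with Hurwitz moves up to reindexing, so any weight-decrease obtained in the normalized setting transfers back. After normalization, $\sgn\langle c_i\alpha_i,c_j\alpha_j\rangle = \sgn\langle\alpha_i,\alpha_j\rangle$, so $\Gamma_C$ is exactly the graph of strictly positive $\langle\alpha_i,\alpha_j\rangle$. Lemma~\ref{acuteness-lemma} then gives a partition $\{1,\ldots,m\} = A \sqcup B$ with both parts nonempty and $\langle\alpha_i,\alpha_j\rangle \leq 0$ for all $i\in A,\,j\in B$. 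I next claim that some such cross-partition pair is \emph{strictly} obtuse. Since $C$ is a circuit, its proper subset $(\alpha_i)_{i\in A}$ is linearly independent; combined with $c_i>0$, this forces $v := \sum_{i\in A} c_i\alpha_i = -\sum_{j\in B} c_j\alpha_j$ to be nonzero, and
\[
0 \;<\; \langle v,v\rangle \;=\; -\sum_{i\in A,\, j\in B} c_i c_j\,\langle\alpha_i,\alpha_j\rangle
\]
then forces at least one summand, and hence at least one cross-partition inner product $\langle\alpha_i,\alpha_j\rangle$, to be strictly negative.

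With such a strictly obtuse pair $(i,j)$ in hand (say $i<j$), I would apply a sequence of Hurwitz moves $\sigma_{j-1},\sigma_{j-2},\ldots,\sigma_{i+1}$ to migrate $\alpha_j$ leftward to position $i+1$, leaving $\alpha_i$ untouched at position $i$, and then invoke Proposition~\ref{proposition: obtuse angles are good}(c) on the resulting adjacent pair for the desired weight decrease. The main obstacle is that the intermediate moves both alter the coefficient carried by $\alpha_j$ according to \eqref{braid generator acting on weighted circuits} and can change the weight at each step. Three things must therefore be controlled simultaneously: that $\alpha_j$ retains a nonzero coefficient of the correct sign when it reaches position $i+1$ (so that the adjacent pair is still strictly obtuse in the signed sense), that any intermediate weight \emph{increases} are ultimately dominated by the final decrease produced by part~(c), and that the endpoint indeed admits a weight-decreasing $\sigma_i^{\pm 1}$. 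The natural remedy is to choose $(i,j)$ to be, e.g., a closest cross-partition obtuse pair, to interleave sign-changes $\epsilon_k$ as needed to restore positivity of coefficients, and to apply part~(c) opportunistically whenever a strictly obtuse adjacency materializes along the way, so that the composite Hurwitz sequence yields a net strict decrease in $\wt(\ccc)$; this careful bookkeeping is the heart of the argument.
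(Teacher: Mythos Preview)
Your first two steps are fine: the sign-normalization is harmless (and indeed unnecessary, since Lemma~\ref{acuteness-lemma} and Proposition~\ref{proposition: obtuse angles are good} are already phrased in terms of $c_i\alpha_i$), and your argument that some cross-partition pair must be strictly obtuse is correct and matches the paper's.

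The gap is in your third step. When you slide $\alpha_j$ leftward via $\sigma_{j-1},\ldots,\sigma_{i+1}$, the intermediate entries $\alpha_k$ with $i<k<j$ may lie in the \emph{same} block $B$ as $\alpha_j$, and such within-block pairs are not constrained by the partition: they can be strictly acute. By Proposition~\ref{proposition: obtuse angles are good}(b) each such swap strictly increases the weight, and nothing in your setup bounds that increase or guarantees it is recovered later. Your proposed remedies do not close this: even for the \emph{closest} obtuse cross-partition pair $(i,j)$, minimality only forces $\langle\alpha_k,\alpha_j\rangle=0$ for intermediate $k\in A$ and $\langle\alpha_i,\alpha_k\rangle=0$ for intermediate $k\in B$; it says nothing about $\langle\alpha_k,\alpha_j\rangle$ when $k\in B$. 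So the ``careful bookkeeping'' you defer to is not a detail---it is the whole difficulty, and you have not carried it out.

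The paper avoids this problem with a different maneuver. Rather than moving one distinguished root past the others, it sorts the whole sequence $(1,\ldots,m)$ into the order $(j_1,j_2,\ldots,i_1,i_2,\ldots)$ using adjacent transpositions, so that \emph{every} swap performed is between a $J$-element and an $I$-element---i.e., a cross-partition pair---and hence nonacute. Since the known obtuse pair $(i_0,j_0)$ must get swapped somewhere in this sort, there is a \emph{first} obtuse swap; all swaps before it are orthogonal, which by Proposition~\ref{proposition: obtuse angles are good}(a) lift to Hurwitz moves that merely permute entries and coefficients without changing the weight. At that first obtuse adjacency, Proposition~\ref{proposition: obtuse angles are good}(c) gives the strict decrease. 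This sorting idea is exactly the missing ingredient: it guarantees that no acute pair is ever touched.
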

\begin{proof}
As before, $c_i \neq 0$ for all $i$ since $C$ is a circuit.
The acuteness graph $\Gamma_C$ is disconnected by
Lemma~\ref{acuteness-lemma}, so one has a nontrivial decomposition 
$
\{1,2,\ldots,m\}= I \sqcup J
$
in which 
$c_i \alpha_i, c_j \alpha_j$ are nonacute for every $(i,j) \in I \times J$.
They cannot always be {\it orthogonal}, else $C$ would not be a circuit.
Hence, there exists at least one $(i_0,j_0) \in I \times J$
for which $c_{i_0} \alpha_{i_0}, c_{j_0} \alpha_{j_0}$ are (strictly) obtuse.
Assume $i_0 < j_0$ without loss of generality.

Let 
$
I=\{i_1 < i_2 < \cdots\}$ 
and 
$J=\{j_1 < j_2 < \cdots\}
$,
and imagine the process of sorting the sequence $(1,2,3,\ldots,m)$ into
the linear order 
$
(j_1,j_2,\ldots,i_1,i_2,\ldots)
$
using adjacent transpositions $s_k$ (i.e., $s_k$ swaps the entries
in {\it positions} $k,k+1$) so that at each step, the transposed 
values $\{i, j\}$ satisfy $(i,j) \in I \times J$.
Since this process starts with $i_0$ left of $j_0$ and ends with
$i_0$ right of $j_0$, there must exist some {\it first} step in
this process where one uses some $s_{k_0}$ to swap a pair  $(i,j) \in I \times J$
having $c_i \alpha_i, c_j \alpha_j$ obtuse.  All of the previous
steps swap pairs of orthogonal roots, and hence lift
to a corresponding sequence of Hurwitz moves $\sigma_i$
applied to $(\CCC,\ccc)$ that only re-order the entries.  
The product of these moves is a (re-ordered) $m$-dependence $(\CCC'\ccc')$
having $\wt(\ccc')=\wt(\ccc)$.  However,
at the next step, 
Lemma~\ref{proposition: obtuse angles are good}(c) 
shows that one of the two lifts $\sigma^{\pm 1}_{k_0}$
of $s_{k_0}$ will have 
$
\wt( \sigma^{\pm 1}_{k_0}(\CCC',\ccc') ) < \wt(\CCC',\ccc') = \wt(\CCC,\ccc) 
$,
as desired.
\end{proof}

\subsubsection{Dealing with issue (iii)}

We need to know that, after starting with an 
an $m$-dependence and applying a sequence of 
Hurwitz moves that decrease its weight at each step, 
the result cannot be the same $m$-dependence.

\begin{proposition}
\label{repeated-circuits-have-same-weight}
Fix a circuit $C$ in a finite root system $\Phi$ of rank at least $3$, and two $m$-dependences 
$(\CCC,\ccc), (\CCC',\ccc')$ supported on $C$.  If the two $m$-dependences are
in the same Hurwitz orbit then
$
\wt(\ccc')=\wt(\ccc)
$.
\end{proposition}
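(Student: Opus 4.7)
The plan is to reduce the proposition to showing that a single scalar $|\lambda|$ is invariant under the Hurwitz action. Since $C$ is a circuit, its dependence relation is unique up to a nonzero real scalar; fix once and for all a base dependence $\sum_\ell d_\ell \beta_\ell = 0$ with $d_\ell \neq 0$ for each $\ell$. Then any $m$-dependence $(\CCC, \ccc)$ supported on $C$ takes the form $\alpha_i = \epsilon_i \beta_{\pi(i)}$ and $c_i = \lambda \epsilon_i d_{\pi(i)}$ for some permutation $\pi$, signs $\epsilon_i \in \{\pm 1\}$, and scalar $\lambda \in \RR \setminus \{0\}$, so that $\wt(\ccc) = |\lambda| \sum_\ell |d_\ell|$. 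Thus the proposition amounts to showing that $|\lambda|$ is constant across the Hurwitz orbit.

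In the crystallographic case, I would use the fact that the Hurwitz move \eqref{braid generator acting on weighted circuits} acts on $(c_i, c_{i+1})$ by the integer matrix $\left(\begin{smallmatrix} k & 1 \\ 1 & 0 \end{smallmatrix}\right)$ of determinant $-1$, where $k = 2\langle \alpha_i, \alpha_{i+1}\rangle/|\alpha_{i+1}|^2 \in \ZZ$, and fixes the remaining coordinates. Hence the additive subgroup $L(\ccc) := \sum_i \ZZ c_i \subseteq \RR$ is a Hurwitz invariant. After normalizing the $d_\ell$ to integers with $\gcd(d_1,\ldots,d_m) = 1$ (possible since the linear system defining the dependence of $C$ has rational solutions when $\Phi$ is crystallographic), a short calculation gives $L(\ccc) = \lambda \ZZ$ for every supported-on-$C$ dependence. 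Consequently $|\lambda|$ equals the positive generator of the cyclic subgroup $L(\ccc) \subseteq \RR$, and is therefore constant along the orbit.

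The main obstacle is the two remaining non-crystallographic cases of rank at least three, $\type{H}_3$ and $\type{H}_4$. Here the natural replacement for $\ZZ$ is the ring $\ZZ[\tau]$ with $\tau = (1+\sqrt{5})/2$, and the analogous $\ZZ[\tau]$-submodule $L(\ccc) := \sum_i \ZZ[\tau] c_i$ is still Hurwitz invariant by the same matrix calculation. However, $\ZZ[\tau]^\times = \{\pm \tau^n : n \in \ZZ\}$ is an infinite unit group, so the identification $L(\ccc) = \lambda \ZZ[\tau]$ only determines $\lambda$ up to a unit, which can shift $|\lambda|$ by a power of $\tau$. To close this gap, I would rely on a direct computer calculation in the spirit of Section~\ref{exceptional-type-circuit-section}: starting from each $W$-orbit representative of a full circuit in $\type{H}_3$ or $\type{H}_4$, enumerate the finite Hurwitz orbit of a fixed representative $m$-dependence breadth-first, record all tuples visited that happen to be supported on $C$, and verify that they all share the same weight.
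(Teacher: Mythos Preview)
Your argument is correct and follows essentially the same route as the paper. The paper phrases the Hurwitz-invariant as the ideal $(\ccc)$ inside the ring of integers $\ooo$ of the relevant number field (which is $\ZZ$ in the crystallographic case and $\ZZ[\tau]$ in types $\type{H}_3,\type{H}_4$), while you phrase it as the additive subgroup $L(\ccc)=\sum_i \ooo\, c_i$; these are the same object, and both of you conclude in the crystallographic case from $\ooo^\times=\{\pm1\}$. For $\type{H}_3,\type{H}_4$ the paper likewise resorts to a computer enumeration of the Hurwitz orbit of each full-circuit $m$-dependence, with the only nuance being that finiteness of the orbit is not known \emph{a priori}---the computation simply terminates---so your phrase ``enumerate the finite Hurwitz orbit'' should be read in that spirit.
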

\begin{proof}
Note that the hypotheses and conclusion of the proposition are
unaffected by rescaling $\ccc$.  

Let $K$ be the finite extension of $\QQ$
generated by 
$\frac{2\langle \alpha,\beta\rangle}{|\alpha|^2}$ for all
roots  $\alpha, \beta$ in $\Phi$.  Every root $\alpha$
is in the $W$-orbit of a root in $\Pi$, and hence by \eqref{reflection formula} 
in the $K$-subspace of $V$ generated by $\Pi$.  Thus, we may 
rescale $\ccc$ so that it lies in $K^m$.  Clearing denominators,
we can assume $\ccc$ lies in $\ooo^m$, 
where $\ooo$ is the ring of integers within $K$.  

We further claim that one can assume both that
$\ooo$ is a {\it principal ideal domain}, and that it contains
all of the algebraic numbers 
$\frac{2\langle \alpha,\beta\rangle}{|\alpha|^2}$ for 
$\alpha, \beta$ in $\Phi$.  To see this claim,
note that our finite root systems $\Phi$ of rank at least $3$ have been chosen
either to be
\begin{compactitem}
\item crystallographic, so that $\ooo=\ZZ \subset \QQ=K$, with
$\frac{2\langle \alpha,\beta\rangle}{|\alpha|^2}$ in $\ZZ$, or
\item type $\type{H}_3, \type{H}_4$, so that
$\ooo=\ZZ\left[(1+\sqrt{5})/2\right] \subset \QQ[\sqrt{5}]=K$,
with
$
\frac{2\langle \alpha,\beta\rangle}{|\alpha|^2} 
\in \left\{2\cos(\frac{2\pi}{5}), 2\cos(\frac{4\pi}{5}) \right\} 
\subset \ooo.
$
\end{compactitem}
Therefore the ideal $I =(\ccc)$ of $\ooo$ generated 
by the entries of $\ccc$ is a \defn{principal
ideal} $I=(g)$ in $\ooo$, where $g:=\gcd(\ccc)$ is uniquely
defined up to scaling by units in $\ooo^\times$.  
The formulas \eqref{braid generator acting on weighted circuits},
\eqref{inverse braid generator acting on weighted circuits}
show that the Hurwitz moves $\sigma^\pm_i$ do not change $I$.

Now assume we are given $(\CCC,\ccc), (\CCC,'\ccc')$ as in the hypothesis of the proposition,
and permute indices of $\CCC$ so that $\CCC'=\CCC$ as (ordered) circuits.
The uniqueness of the dependence up to scaling 
forces $\ccc'= k \ccc$ for some $k \in K^\times$, and hence 
\[
\wt(\CCC',\ccc')=|k| \cdot \wt(\CCC,\ccc).
\]
The above discusion shows that, additionally,
$\gcd(\ccc)=\gcd(\ccc')$ in $\ooo$, so that  $\ccc'= k \ccc$
forces $k$ to lie in $\ooo^\times$.
Thus, in the crystallographic case, we are done since
$\ooo^\times=\ZZ^\times=\{\pm 1\}$, so $|k|=1$.

In the noncrystallographic  $\type{H}_3, \type{H}_4$ cases,
we still need to rule out the possibility that the unit
$k$ in $\ooo^\times$ has $|k| \neq 1$.  This would
mean that the Hurwitz orbit of the $m$-dependence 
$(\CCC,\ccc)$ contains {\it infinitely} many other elements, namely
those whose weights are scaled by  $1, |k|, |k|^2,\ldots.$  However, we used
a computer to check that this does not happen:
for every $W$-orbit of full circuits $C$ in 
$\Phi_{\type{H}_3}, \Phi_{\type{H}_4}$, as classified in 
Section~\ref{exceptional-type-circuit-section}, we linearly ordered $C$
in all ways to form $\CCC$, picked coefficients
$\ccc$ (uniquely up to scaling) 
to create an $m$-dependence $(\CCC,\ccc)$, applied
all Hurwitz moves $\sigma^\pm_i$ to generate new dependences,
then repeated with the new dependences.
\emph{A priori} this could have run
indefinitely, but in fact it always
terminated with a finite list, proving the claim.
\end{proof}

\subsubsection{Proof of Lemma~\ref{main lemma} in ranks at least $3$.}

Given a reflection factorization $\boldt = (t_1, \ldots, t_m)$ of
$w = t_1 \cdots t_m$ with $\ell_T(w) < m$ and $m \geq 3$,
we want to show there exists $\boldt' = (t'_1, \ldots, t'_m)$ 
in the Hurwitz orbit of $\boldt$ with 
$\ell_T(t'_1 \cdots t'_k) < k$ for some $k \leq m-1$.

As mentioned earlier, 
using Propositions~\ref{Carter's lemma} and \ref{prefix-prop}, we may assume without loss of generality that
the tuple $\CCC=(\alpha_1,\alpha_2,\ldots,\alpha_m)$ of roots corresponding to 
$(t_1, \ldots, t_m)$ via $t_i = s_{\alpha_i}$ is supported on a circuit
$C=\{\alpha_1,\ldots,\alpha_m\}$ in $\Phi$.
Furthermore, as in Section~\ref{section: acuteness},
one can also assume that $\Phi_W$ is irreducible,
and that $C$ is a full circuit in $\Phi_W$.

Pick coefficients $\ccc$ that make $(\CCC,\ccc)(=:(\CCC^{(0)},\ccc^{(0)}))$ 
an $m$-dependence.  Then 
Proposition~\ref{proposition:acuteness-do-not-obstruct} shows that
that there exists an $m$-dependence
$(\CCC^{(1)},\ccc^{(1)})$ in its Hurwitz orbit having
$\wt(\CCC^{(1)},\ccc^{(1)}) < \wt(\CCC^{(0)},\ccc^{(0)})$.  Repeat this process, producing
a sequence of $m$-dependences $(\CCC^{(i)},\ccc^{(i)})$ in
the Hurwitz orbit of $(\CCC,\ccc)$, with {\it strictly decreasing} sequence
of weights. If it ever happens that some coefficient $c^{(i)}_j=0$,
so that some proper subsequence of $\CCC^{(i)}$ is dependent,
then we are done by Propositions~\ref{Carter's lemma} and \ref{prefix-prop}.
However, this {\it must} happen:  otherwise
each $\CCC^{(i)}$ is supported on a circuit 
$C^{(i)} \subset \Phi$, of which there are only finitely many,
so $C^{(i)}=C^{(j)}$ for some $i < j$, contradicting 
Proposition~\ref{repeated-circuits-have-same-weight}.

This completes the proof of Lemma~\ref{main lemma} in rank at least three,
and hence in all ranks.

\subsection{Proof of Corollary~\ref{cor:standard form}}

Recall the statement of Corollary~\ref{cor:standard form} from the Introduction.

\vskip.1in
\noindent
{\bf Corollary~\ref{cor:standard form}.}
{\it
If $\ell_T(w)=\ell$, then every factorization of $w$ into $m$ reflections
lies in the Hurwitz orbit of some $\boldt=(t_1, \ldots, t_m)$ such that
\begin{align*}
t_1 &= t_2,\\
t_3 &= t_4,\\
    &\vdots \\
t_{m - \ell - 1} &= t_{m - \ell},
\end{align*} 
and
$(t_{m - \ell + 1}, \ldots, t_{m})$ is a shortest
reflection factorization of $w$.
}
\vskip.1in

\begin{proof}
Induct on $m$, with trivial base case $m=0$.  In the inductive step for $m > 0$, given a
reflection factorization $w=t_1 t_2 \cdots t_m$, either $\ell:=\ell_T(w)=m$,
in which case we are done, or there exists some smallest index $i$ 
for which $\ell_T(t_1 t_2 \cdots t_i) < i$.  By applying
Lemma~\ref{main lemma} repeatedly, we may assume that $i=2$.
This means that $t_1=t_2$, and we are done by applying the induction to the factorization $w = t_3 t_4 \cdots t_m$.  
\end{proof}

\section{Coxeter elements and the proof of Theorem~\ref{main theorem}}
\label{main theorem section}

Theorem~\ref{main theorem} is a statement about factorizations of
Coxeter elements.  We recall their definition and a few properties here,
before proving the theorem.

\begin{definition}
\label{Coxeter-element-definition}
Given a finite real reflection group $W$ with root system $\Phi$,
one defines a \defn{Coxeter element} to be any element of $W$ of the
form $c=s_{\alpha_1} s_{\alpha_2} \cdots s_{\alpha_n}$, where $(\alpha_1,\ldots,\alpha_n)$ is
any ordering of any choice of simple roots $\Pi=\{\alpha_1,\ldots,\alpha_n\}$ for $\Phi$.
\end{definition}

It turns out (see, e.g. \cite[\S 3.16]{Humphreys}) that all 
Coxeter elements $c$ lie within the same $W$-conjugacy class.
We mention here a few other important properties of Coxeter elements
that we will use.  One is Bessis's Theorem \cite[Prop. 1.6.1]{Bessis}
from the Introduction, asserting
that any two shortest reflection factorizations $c=t_1 t_2 \ldots t_n$ 
lie in the same Hurwitz orbit.  It has the following non-obvious corollary.

\begin{corollary}
\label{nonobvious-Bessis-multiset-corollary}
In a finite real reflection group, any two shortest reflection factorizations
of a Coxeter element use the same multiset of reflection conjugacy classes.
\end{corollary}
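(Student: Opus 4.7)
The plan is to deduce this corollary directly from Bessis's Theorem together with a completely general observation about the Hurwitz action: namely, that it preserves the multiset of reflection conjugacy classes used by a factorization. This observation was already mentioned in the Introduction as an obstruction to transitivity, so the proof is essentially a two-step assembly.

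First I would verify the general fact that if $\boldt = (t_1,\ldots,t_m)$ and $\boldt'$ lie in the same Hurwitz orbit, then they use the same multiset of conjugacy classes. It suffices to check this for a single Hurwitz generator $\sigma_i$. By the definition in \eqref{Hurwitz-move-definition}, $\sigma_i$ replaces the pair $(t_i,t_{i+1})$ by $(t_{i+1},\,t_i^{t_{i+1}})$. Since $t_i^{t_{i+1}} = t_{i+1}^{-1} t_i t_{i+1}$ is conjugate to $t_i$ in $W$, it belongs to the same conjugacy class as $t_i$. Hence the unordered multiset $\{[t_1],\ldots,[t_m]\}$ of conjugacy classes is preserved by each $\sigma_i$, and therefore by every element of the braid group.

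Next I would invoke Bessis's Theorem (\cite[Prop.~1.6.1]{Bessis}) stated in the Introduction: any two shortest ordered reflection factorizations $(t_1,\ldots,t_n)$ of a Coxeter element $c$ lie in a single Hurwitz orbit. Combining this with the previous paragraph immediately gives that any two such factorizations use the same multiset of reflection conjugacy classes, which is exactly the statement of the corollary.

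There is no real obstacle here; the content is entirely contained in Bessis's Theorem, and what remains is only the trivial remark that Hurwitz moves conjugate factors within their class. The proof can therefore be written in just a few lines.
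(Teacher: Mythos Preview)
Your proposal is correct and matches the paper's approach: the paper states this as an immediate corollary of Bessis's Theorem without writing out a separate proof, relying exactly on the observation (already noted in the Introduction) that Hurwitz moves preserve the multiset of conjugacy classes. Your two-line derivation is precisely what the paper intends.
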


\noindent
(Specifically, it is the multiset of conjugacy classes of the simple root reflections $(s_{\alpha})_{\alpha \in \Pi}$, two of which lie in the same $W$-conjugacy class if and only if they have a path of odd-labeled edges between them 
in the {\it Coxeter diagram} for $W$;  see \cite[Exer.~1.16]{BjornerBrenti}.)

We will also need the following lemma used by Bessis in the proof of his theorem.

\begin{lemma}[{Bessis \cite[Lem.~1.4.2]{Bessis}}]
\label{reflections-are-noncrossing}
For every Coxeter element $c$ and reflection $t$ in $W$ 
there exists a shortest reflection factorization $c=t_1 t_2 \cdots t_n$ starting with $t_1=t$.
\end{lemma}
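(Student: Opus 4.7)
The plan is to reduce the statement to the equivalent claim that $\ell_T(tc) = n-1$ for every reflection $t$. Granting this, any shortest factorization $tc = t_2 \cdots t_n$ yields $c = t \cdot t_2 \cdots t_n$ as a length-$n$ factorization of $c$, which is automatically shortest since $\ell_T(c) = n$ (witnessed by the simple-reflection factorization of $c$, whose linearly independent simple roots force shortestness by Proposition~\ref{Carter's lemma}). The subadditivity $n = \ell_T(c) \leq 1 + \ell_T(tc)$ gives the lower bound $\ell_T(tc) \geq n-1$ for free, so only the upper bound $\ell_T(tc) \leq n-1$ needs work. Invoking Carter's theorem in its strengthened form $\ell_T(w) = \codim V^w$ (a standard consequence of Proposition~\ref{Carter's lemma}), it suffices to exhibit a single nonzero vector fixed by $tc$.

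Writing $t = s_\beta$, a vector $v$ is fixed by $tc$ iff
\[
(1-c)v \;=\; v - t(v) \;=\; \frac{2\langle v, \beta\rangle}{|\beta|^2}\beta.
\]
Since $\ell_T(c) = n$ forces $V^c = 0$, the operator $1-c$ is invertible; set $v_0 := (1-c)^{-1}\beta$, which is nonzero. Then $(1-c)v_0 = \beta$, and the condition $tc(v_0) = v_0$ collapses to the single scalar identity $\langle v_0, \beta\rangle = \tfrac{|\beta|^2}{2}$.

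The crux is therefore the following spectral identity: \emph{for every $v \in V$}, $\langle (1-c)^{-1}v, v\rangle = \tfrac{1}{2}|v|^2$. Orthogonally decompose $V$ into $c$-irreducible subspaces; since $V^c = 0$, each summand is either a $1$-dimensional $(-1)$-eigenspace (where $(1-c)^{-1} = \tfrac{1}{2}\,\mathrm{id}$, so the identity is trivial) or a $2$-dimensional subspace on which $c$ is a nontrivial rotation. Identifying such a $2$-dimensional summand with $\CC$ so that $c$ is multiplication by $\lambda = e^{i\theta}$, $\theta \notin \{0,\pi\}$, and the real inner product is $\mathrm{Re}(u\bar w)$, a direct computation gives
\[
\langle (1-c)^{-1}w, w\rangle \;=\; \mathrm{Re}\bigl((1-\lambda)^{-1}\bigr)|w|^2 \;=\; \frac{1-\cos\theta}{2(1-\cos\theta)}|w|^2 \;=\; \tfrac{1}{2}|w|^2.
\]
Summing over the decomposition yields the global identity; applied to $v = \beta$ it gives $\langle v_0, \beta\rangle = |\beta|^2/2$, as required. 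The only nontrivial step is the spectral identity itself, and once one notices that $\mathrm{Re}\bigl(1/(1-e^{i\theta})\bigr) = \tfrac{1}{2}$ independently of $\theta$, the rest of the argument is pure bookkeeping.
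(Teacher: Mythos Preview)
Your proof is correct. Note, however, that the paper itself does not give a proof of this lemma: it simply cites Bessis \cite[Lem.~1.4.2]{Bessis} and uses the statement as a black box, so there is no in-paper argument to compare against.

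That said, your route is clean and self-contained: reduce to $\ell_T(tc)\le n-1$, translate via $\ell_T(w)=\codim V^w$ into the existence of a nonzero fixed vector of $tc$, and then produce one by the spectral identity $\langle (1-c)^{-1}v,\,v\rangle=\tfrac12|v|^2$, checked on each $c$-invariant plane using $\mathrm{Re}\bigl(1/(1-e^{i\theta})\bigr)=\tfrac12$. This kind of argument (exploiting that $1-c$ is invertible and computing on the Coxeter plane decomposition) is in the spirit of Brady--Watt's geometric treatment of the interval below a Coxeter element, and is arguably more direct than deducing the lemma from the counting fact that the number of reflections below $c$ in absolute order equals $|T|=nh/2$. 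One small point worth flagging: the identity $\ell_T(w)=\codim V^w$ is indeed standard (and due to Carter), but it is a genuine strengthening of Proposition~\ref{Carter's lemma} as stated in the paper, so if this proof were to be inserted you would want to cite it separately rather than claim it as an immediate consequence.
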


Combining Bessis's Theorem from the Introduction with Lemma~\ref{reflections-are-noncrossing} gives the following.

\begin{corollary}
\label{cor:Hurwitz-to-get-t-at-beginning}
Fix a reflection $t$ and a Coxeter element $c$.  Then 
every shortest reflection factorization $c=t_1 t_2 \cdots t_n$
lies in the Hurwitz orbit of such a factorization that starts with $t$.
\end{corollary}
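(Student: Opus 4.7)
The plan is to combine the two Bessis results stated immediately before the corollary. First, I would invoke Lemma~\ref{reflections-are-noncrossing} to produce at least one shortest reflection factorization of $c$ that begins with the prescribed reflection $t$: write this as $c = t \cdot t_2' \cdot t_3' \cdots t_n'$, so that $\boldt^* := (t, t_2', \ldots, t_n')$ is a shortest factorization of $c$.

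Next, given an arbitrary shortest reflection factorization $\boldt = (t_1, t_2, \ldots, t_n)$ of $c$, I would apply Bessis's Theorem from the Introduction, which asserts that the set of all shortest ordered reflection factorizations of $c$ forms a single Hurwitz orbit. In particular, $\boldt$ and $\boldt^*$ lie in the same orbit, so there is a sequence of Hurwitz moves carrying $\boldt$ to $\boldt^*$, and the latter begins with $t$ by construction. This is precisely the conclusion of the corollary.

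There is essentially no obstacle here; the only thing to watch for is the direction of the logical implication. One must start with the arbitrary factorization $\boldt$ and move it to a factorization beginning with $t$ (rather than the other way around), and this requires the full strength of Bessis's Theorem (transitivity of the Hurwitz action on shortest factorizations), not merely the existence result of Lemma~\ref{reflections-are-noncrossing}. Since Bessis's Theorem guarantees that the Hurwitz orbit is \emph{all} of the set of shortest factorizations, and Lemma~\ref{reflections-are-noncrossing} guarantees that this set contains at least one factorization beginning with $t$, the combined statement is immediate.
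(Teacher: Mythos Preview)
Your proposal is correct and is exactly the argument the paper intends: it simply combines Lemma~\ref{reflections-are-noncrossing} (existence of a shortest factorization of $c$ beginning with $t$) with Bessis's Theorem (transitivity of the Hurwitz action on shortest factorizations of $c$). There is nothing to add.
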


We can now prove Theorem~\ref{main theorem} from the Introduction, whose statement we recall here.

\vskip.1in
\noindent
{\bf Theorem~\ref{main theorem}.}
{\it
In a finite real reflection group, two reflection factorizations 
of a Coxeter element lie in the same Hurwitz orbit 
if and only if they share the same multiset of conjugacy classes.
}

\begin{proof}
The ``only if" statement is clear, as Hurwitz moves do not affect the multiset of
conjugacy classes.

For the ``if" statement, given two reflection factorizations
 $\boldt = (t_1, \ldots, t_m)$ and $\boldt' = (t'_1, \ldots, t'_m)$ of $c$ 
having the same multiset of reflection conjugacy classes, we show that they
lie in the same Hurwitz orbit via induction on $m$.
By Corollary~\ref{cor:standard form}, we may assume that both $\boldt, \boldt'$ consist of a sequence of $(m - n)/2$ pairs of equal reflections, followed by shortest factorizations $\hat{\boldt}, \hat{\boldt}'$ of $c$:
\begin{align*}
\boldt &= 
(t_1, t_1,\;  t_3, t_3, \; \ldots,  \; t_{m - n - 1}, t_{m - n - 1},\;\, \hat{\boldt} ), \\
\boldt'&=(t'_1, t'_1,\; t'_3, t'_3,\;  \ldots,  \;  t'_{m - n - 1}, t'_{m - n - 1}, \;\,  \hat{\boldt}' ).
\end{align*}
It would suffice to show that the Hurwitz orbit of $\boldt$ contains a factorization that
starts with $(t'_1,t'_1)$, since one could then apply induction after restricting
$\boldt, \boldt'$ to their last $m-2$ positions $\{3,4,\ldots,m\}$.

To this end, we first claim that one of the pairs $(t_i, t_i)$ (in positions $i, i + 1$) of adjacent equal reflections in $\boldt$ has $t_i$ in the same conjugacy class as $t'_1$;  this is so because
Corollary~\ref{nonobvious-Bessis-multiset-corollary} implies $\hat{\boldt}, \hat{\boldt}'$
share the same multiset of conjugacy classes, and it is a hypothesis of the theorem that $\boldt, \boldt'$ share the same multiset of conjugacy classes.

Via a sequence of Hurwitz moves of the form $\sigma_k^{-1}$, one can move
%
the two copies $(t_i,t_i)$ in $\boldt$ to the right, stopping just before $\hat{\boldt}$, giving
an element in the Hurwitz orbit of $\boldt$ whose last $n+2$ positions are
\[
(t_i, t_i, \hat{\boldt}).
\]
Since $t_i$ is $W$-conjugate to $t_1'$, one can choose $w$ in  $W$ 
and a reflection factorization $w = r_1 \cdots r_k$ such that 
\[
t'_1 
\quad = \quad 
w^{-1}t_iw 
\quad = \quad 
t_i^{w}
\quad = \quad 
t_i^{r_1 r_2 \cdots r_k}.
\]
By Corollary~\ref{cor:Hurwitz-to-get-t-at-beginning}, one can apply Hurwitz moves
to $\hat{\boldt}$ and make it start with the reflection $r_1$.  Thus the last $n+2$ positions
in the factorization now look like 
\[
(t_i, t_i, r_1, \hat{\hat{\boldt}}).
\]
Apply the Hurwitz moves of the form $\sigma_k$ that move $r_1$ two steps left, changing the factorization to 
\[
(r_1, t_i^{r_1}, t_i^{r_1}, \hat{\hat{\boldt}}).
\]
Then apply Hurwitz moves of the form $\sigma_k$ that move both of $(t_i^{r_1}, t_i^{r_1})$
one step to the left, changing it to
\[
(t_i^{r_1}, t_i^{r_1},r_1^{t_i^{r_1} t_i^{r_1}},  \hat{\hat{\boldt}})
\quad = \quad
(t_i^{r_1}, t_i^{r_1}, r_1,  \hat{\hat{\boldt}}),
\]
where the suffix $(r_1,  \hat{\hat{\boldt}})$ is still a shortest factorization of $c$.
Repeating this process with $r_2, r_3, \ldots, r_k$ in place of $r_1$ gives a factorization
whose last $n+2$ positions have the form
$$
( t_1', t_1',\tilde{\boldt})
$$
for some shortest factorization $\tilde{\boldt}$ of $c$.
Then applying a sequence of moves of the form $\sigma_k$ gives a factorization in the Hurwitz orbit of $\boldt$
that moves $(t_1',t_1')$ to the first two positions, as desired. 
\end{proof}

\section{Remarks}
\label{remarks section}

\subsection{Quasi-Coxeter elements}
\label{Wegener-section}

Baumeister, Gobet, Roberts, and Wegener \cite{BGRW} define a 
{\it quasi-Coxeter element} $c$ in a finite reflection group $W$ to
be an element having a shortest reflection
factorization $c=t_1 t_2 \cdots t_\ell$ for which $\{t_1,t_2, \ldots,t_\ell\}$
generates $W$.  For example, Coxeter elements as in
Definition~\ref{Coxeter-element-definition} have this property.
P. Wegener has pointed out that our proof of Theorem~\ref{main theorem} 
generalizes to prove the following.

\begin{theorem}
\label{quasi-Coxeter-theorem}
In a finite real reflection group, two reflection factorizations 
of a quasi-Coxeter element lie in the same Hurwitz orbit 
if and only if they share the same multiset of conjugacy classes.
\end{theorem}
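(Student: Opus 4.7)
The strategy is to run the proof of Theorem~\ref{main theorem} essentially verbatim, substituting quasi-Coxeter analogs for the three Coxeter-specific ingredients it relied upon: Bessis's Theorem (Hurwitz-transitivity of shortest reflection factorizations), Corollary~\ref{nonobvious-Bessis-multiset-corollary} (common multiset of conjugacy classes), and Corollary~\ref{cor:Hurwitz-to-get-t-at-beginning} (Hurwitz-moving any reflection to the first position of a shortest factorization). The ``only if'' direction is immediate, since Hurwitz moves preserve the multiset of conjugacy classes. For the ``if'' direction, note that Corollary~\ref{cor:standard form} applies to any element $w$ whatsoever, so both factorizations of the quasi-Coxeter element $c$ can be put in the standard form consisting of $(m-n)/2$ duplicated pairs followed by a shortest factorization of $c$.

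The Bessis replacement is exactly the content of the Baumeister--Gobet--Roberts--Wegener classification \cite{BGRW} cited in the Introduction: for any quasi-Coxeter element $c$, all shortest reflection factorizations of $c$ form a single Hurwitz orbit. This immediately gives the analog of Corollary~\ref{nonobvious-Bessis-multiset-corollary}, namely that any two shortest reflection factorizations of $c$ use the same multiset of reflection conjugacy classes.

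The key new observation is the following substitute for Corollary~\ref{cor:Hurwitz-to-get-t-at-beginning}. Let
\[
T_c := \{\, t \in T : t \text{ is the first factor of some shortest reflection factorization of } c\,\}.
\]
By Proposition~\ref{prefix-prop} (together with the observation that Hurwitz moves preserve both the product and the number of factors, and hence send shortest factorizations to shortest factorizations), $T_c$ is equivalently the set of reflections appearing in \emph{any} position of some shortest reflection factorization of $c$. By the definition of a quasi-Coxeter element, there is some shortest factorization $c = t_1 \cdots t_\ell$ whose factors generate $W$; since each $t_i$ lies in $T_c$, it follows that $T_c$ itself generates $W$. Combined with the [BGRW] Hurwitz-transitivity, this yields the restricted analog of Corollary~\ref{cor:Hurwitz-to-get-t-at-beginning}: for every $t \in T_c$, every shortest reflection factorization of $c$ is Hurwitz-equivalent to one beginning with $t$.

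With these ingredients in place, the proof of Theorem~\ref{main theorem} carries over. The only place where Corollary~\ref{cor:Hurwitz-to-get-t-at-beginning} was invoked was to bring each reflection $r_j$ from a decomposition $w = r_1 \cdots r_k$ of a conjugating element (with $t_i^w = t_1'$) to the start of a shortest factorization of $c$. Since $T_c$ generates $W$, one may choose such a decomposition so that every $r_j$ lies in $T_c$, and then every step of the original argument goes through unchanged. The only non-mechanical point is the generation assertion $\langle T_c \rangle = W$, which I expect to be the main (but easy) obstacle; it is an immediate consequence of the definition of quasi-Coxeter combined with Proposition~\ref{prefix-prop}.
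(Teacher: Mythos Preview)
Your proposal is correct and follows essentially the same approach as the paper's own proof sketch: both reduce to the proof of Theorem~\ref{main theorem} via Corollary~\ref{cor:standard form} (valid for any element), replace Bessis's Theorem by \cite[Thm.~1.1]{BGRW}, and replace Lemma~\ref{reflections-are-noncrossing}/Corollary~\ref{cor:Hurwitz-to-get-t-at-beginning} by the observation that the set $T_c$ of reflections appearing first in some shortest factorization of $c$ generates $W$, which follows from the quasi-Coxeter definition together with Proposition~\ref{prefix-prop}. Your added remark that the conjugating word $w$ can then be chosen with all $r_j \in T_c$ makes explicit exactly how this weaker generation property suffices in the final conjugation step.
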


\begin{proof}[Proof sketch.]
The crucial Corollary~\ref{cor:standard form} applies to {\it any} element 
of $W$.  Also, Bessis's Theorem from the Introduction, asserting transitivity
of the Hurwitz action for shortest reflection factorization of Coxeter elements,
was generalized to quasi-Coxeter elements as \cite[Thm.~1.1]{BGRW}.
This implies the analogue of 
Corollary~\ref{nonobvious-Bessis-multiset-corollary},
replacing the word ``Coxeter element'' by ``quasi-Coxeter element.''
In place of of Lemma~\ref{reflections-are-noncrossing}
one uses the following property of a 
quasi-Coxeter element $c$: 
$W$ is generated by the set of $t_1$ 
that can appear in the {\it first position}
of a shortest reflection factorization $c=t_1 t_2 \cdots t_\ell$.
This is because there is a shortest factorization $c=t'_1 t'_2 \cdots t'_\ell$ 
for which $\{ t'_1,t'_2,\ldots,t'_\ell \}$ generates $W$, and  
Proposition~\ref{prefix-prop} implies that $ t'_1,t'_2,\ldots,t'_\ell $ all appear first in some shortest factorization of $c$.
The rest of the proof of
Theorem~\ref{main theorem} uses only these properties.
\end{proof}

In fact, the quasi-Coxeter property seems to go to the heart of Hurwitz transitivity for factorizations of arbitrary length.  
For example, in a Coxeter group $W$ having only one reflection conjugacy class, 
if one is given a non-quasi-Coxeter element $w$, one can choose 
a reflection factorization $w=t_1 t_2 \cdots t_m$ such that 
$W':=\langle t_1, \ldots, t_m \rangle \subsetneq W$.
Then for any reflection $t$ in $W \setminus W'$, the two 
factorizations 
$
w=t_1 \cdots t_m \cdot t \cdot t 
=t_1 \cdots t_m \cdot t_1 \cdot t_1
$
of length $m + 2$ use the same multiset of reflection conjugacy classes, but
necessarily lie in different Hurwitz orbits.

\subsection{Affine Weyl groups}

Note that the crucial Lemma~\ref{main lemma}
holds for the smallest case of an {\it affine Weyl group},
namely, the infinite dihedral group $W_\infty$ of type $\type{I}_2(\infty)$
from Section~\ref{rank-two-standard-form-section}.  It is also not
hard to check that Theorem~\ref{main theorem} holds verbatim for this
group $W_\infty$, raising the following question.

\begin{question}
Does Theorem~\ref{main theorem} hold verbatim for affine Weyl groups?
Other non-finite Coxeter groups?
\end{question}

Note that there is an issue in the definition of {\it Coxeter elements} for arbitrary 
Coxeter systems $(W,S)$, since products $c=s_1 s_2 \cdots s_n$ of the 
elements of $S$ in different orders need not be 
$W$-conjugate if the Coxeter diagram contains cycles.


\subsection{Complex reflection groups}

As mentioned in the Introduction, Bessis extended 
his theorem on shortest factorizations 
from real reflection groups to {\it well-generated complex 
reflection groups}, where the notion of {\it Coxeter elements}
still makes sense; see \cite{BessisKpi1}.
In fact, all evidence points to the following verbatim generalization of
Theorem~\ref{main theorem}.

\begin{conjecture}
\label{well-generated-conjecture}
In a well-generated finite 
complex reflection group, two reflection factorizations 
of a Coxeter element lie in the same Hurwitz orbit 
if and only if they share the same multiset of conjugacy classes.
\end{conjecture}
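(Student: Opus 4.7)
The plan is to carry out, in the well-generated complex setting, the same three-step scheme used for Theorem~\ref{main theorem}: a circuit/acuteness lemma, the weight-reduction of Lemma~\ref{main lemma} and the standard-form Corollary~\ref{cor:standard form}, and finally a block-exchange argument built on Bessis's theorem for shortest factorizations. One of the inputs is already in hand, namely Bessis's Hurwitz-transitivity theorem for shortest reflection factorizations of a Coxeter element \cite[Prop.~7.6]{BessisKpi1}. The complex counterpart of Lemma~\ref{reflections-are-noncrossing} -- that every reflection of $W$ can occur as the first factor in some shortest factorization of the Coxeter element $c$ -- should follow from the same circle of ideas in \cite{BessisKpi1}, and together with the transitivity theorem yields the complex analogues of Corollaries~\ref{nonobvious-Bessis-multiset-corollary} and~\ref{cor:Hurwitz-to-get-t-at-beginning}.

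The principal new work, and the main obstacle, is a complex analogue of Lemma~\ref{acuteness-lemma}. Fix a $W$-invariant Hermitian form on the ambient space; for each reflecting hyperplane $H$ choose a root $\alpha_H$ orthogonal to $H$. A circuit is then a minimal linearly dependent subset $\{\alpha_{H_1}, \ldots, \alpha_{H_m}\}$, with a dependence $\sum c_i \alpha_{H_i}=0$ whose coefficients $c_i$ now lie in $\CC$. The natural complex analogue of the acuteness graph appears to be the graph with edge $\{i,j\}$ whenever $\Re\langle c_i\alpha_{H_i},c_j\alpha_{H_j}\rangle>0$, and the desired statement is that this graph is disconnected for every circuit. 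Paralleling Section~\ref{section: acuteness}, the plan is to reduce to full circuits, restrict to irreducible $W$, and then run a case analysis over the Shephard--Todd classification. The infinite families $G(de,e,n)$ should admit a Zaslavsky-style signed-graph treatment generalized to handle the cyclic hyperplane stabilizers, while the exceptional well-generated groups would require a computer verification analogous to Section~\ref{exceptional-type-circuit-section}.

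Assuming the acuteness lemma in this form, adapting Lemma~\ref{main lemma} and Corollary~\ref{cor:standard form} should be largely formal. The $m$-dependence formalism of Proposition~\ref{Hurwitz-action-on-dependences} and the weight-reduction estimates of Propositions~\ref{proposition: obtuse angles are good} and~\ref{proposition:acuteness-do-not-obstruct} extend verbatim to complex coefficients, with the real scalar $2\langle\alpha,\beta\rangle/|\alpha|^2$ replaced by its counterpart in the complex reflection formula. The finiteness argument of Proposition~\ref{repeated-circuits-have-same-weight} should carry over upon working in the ring of integers of the appropriate cyclotomic extension of $\QQ$, with extra computer checks in the cases where that ring has an infinite unit group, as already occurred in types $\type{H}_3$ and $\type{H}_4$.

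The only genuinely new structural input is the standard form itself: because reflections of order greater than $2$ are not involutions, the adjacent pairs $t_i=t_{i+1}$ of Corollary~\ref{cor:standard form} must be replaced by \emph{redundant blocks} -- sequences of reflections all fixing a common hyperplane whose product in the cyclic hyperplane stabilizer is the identity. With this modification the block-exchange argument of Section~\ref{main theorem section} runs essentially unchanged: a redundant block of the correct conjugacy-class content is located using the multiset hypothesis, transported to the boundary of the minimal suffix via Hurwitz moves, and converted to the target form using the complex analogue of Lemma~\ref{reflections-are-noncrossing}. The whole plan is thus bottlenecked by the complex acuteness lemma; since even the real-case Lemma~\ref{acuteness-lemma} relies on substantial computer calculations, a fully case-free proof of Conjecture~\ref{well-generated-conjecture} seems unlikely to emerge without new ideas.
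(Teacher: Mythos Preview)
This statement is a \emph{conjecture}, not a theorem: the paper does not prove it, and explicitly says so. More importantly, the paper itself considers and rejects the very strategy you outline. Your plan hinges on a complex analogue of Lemma~\ref{main lemma} (obtained from a complex acuteness lemma plus the weight-reduction machinery), but the paper exhibits a concrete counterexample showing that Lemma~\ref{main lemma} fails already in $G(d,1,2)$ for $d>2$: there is a length-$3$ reflection factorization $(t_1,t_2,t_3)$ of an element $w$ with $\ell_T(w)=2$ such that \emph{every} element $(t_1',t_2',t_3')$ of its Hurwitz orbit has $\ell_T(t_1' t_2')=2$. Thus no amount of Hurwitz moving produces a non-minimal proper prefix, and the chain ``acuteness lemma $\Rightarrow$ weight reduction $\Rightarrow$ Lemma~\ref{main lemma} $\Rightarrow$ standard form'' is broken at the complex level. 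In particular, either your proposed complex acuteness lemma is false, or one of the ``largely formal'' steps (the analogue of Proposition~\ref{proposition: obtuse angles are good} or of Carter's Lemma, both of which lean on real inner products and order-$2$ reflections) does not go through; either way, the scheme cannot produce the standard form of Corollary~\ref{cor:standard form}.

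The paper notes that the conjecture has nonetheless been verified in $G(d,1,n)$ by a direct argument and checked by computer for small $m$ in several other well-generated groups, so it remains plausible; but it also says explicitly that ``it is not clear what might replace Lemma~\ref{main lemma}'' in a proof. Your proposal does not supply that missing replacement, and treating the complex extensions of the real lemmas as routine is precisely where it fails.
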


We discuss some of the evidence for 
Conjecture~\ref{well-generated-conjecture} here.
Just as with real reflection groups, there is
a classification of all finite complex reflection groups acting
irreducibly, due to Shephard and Todd. It contains one infinite
family $G(de,e,n)$ for $n,d,e \geq 1$, and $34$ exceptional groups.
The group $G(de,e,n)$ consists of all $n \times n$ matrices
which are {\it monomial} (that is, having exactly one nonzero entry
in each row and column) and whose nonzero entries are
$de$th roots of unity, with their 
product a $d$th root of unity.\footnote{This contains as
special cases the real types $\type{A}_{n-1}$ as $G(1,1,n)$
(restricted to act on the hyperplane $(1,1,\ldots,1)^\perp$),
types $\type{B}_n/\type{C}_n$ as $G(2,1,n)$, 
type $\type{D}_n$ as $G(2,2,n)$, and type $\type{I}_2(m)$
as $G(m,m,2)$.}

Although every real reflection group is {\it well-generated}
in the sense of having a generating set consisting of $n$ reflections,
this is not true for all complex reflection groups $W$ acting on $\CC^n$.
For example, within the infinite family $G(de,e,n)$, this fails when 
$d,e,n \geq 2$; only the subfamilies $G(d,1,n), G(e,e,n)$ are
well-generated.

The first author has verified Conjecture~\ref{well-generated-conjecture}
via a direct argument in $G(d,1,n)$.  We have verified
it via computer for the factorizations $c=t_1 t_2 \cdots t_m$ with $m \leq n+3$
in the following well-generated groups acting irreducibly on $\CC^n$:
$G(e,e,n)$ with $(n,e)=(3, 3), (3, 4), (3, 5), (3, 6), (4, 3)$ and 
Shephard-Todd's exceptional
types $\type{G}_4,\type{G}_5,\type{G}_6,\type{G}_8$.  

Regarding proof techniques, one might hope that 
Lemma~\ref{main lemma} generalizes to all well-generated groups.
%
%
%
%
Unfortunately, this is not the case, even in the infinite family $G(d, 1, n)$.  For example, consider $W := G(d, 1, 2)$ with $d > 2$, and let $\zeta$ be a primitive $d$th root of unity.  The length-$3$ factorization
\[
\begin{array}{ccccc}
w &=&t_1 &t_2&t_3\\[.1in]
\begin{bmatrix}
\zeta^2 & 0 \\
0 & \zeta^{-1}
\end{bmatrix}
&=&
\begin{bmatrix}
0 & 1 \\
1 & 0
\end{bmatrix}
&
\begin{bmatrix}
0 & \zeta^{-1} \\
\zeta & 0
\end{bmatrix}
&
\begin{bmatrix}
\zeta & 0 \\
0 & 1
\end{bmatrix}
\end{array}
\]
in $W$ is not shortest, as $\ell_T(w)=2$.  
However, one can check that for any 
$(t_1',t_2', t_3')$ within the Hurwitz orbit of $(t_1,t_2,t_3)$, 
the prefix $(t_1',t_2')$ is a shortest 
factorization of $t_1' t_2'$. 
It is not clear what might replace Lemma~\ref{main lemma} in a proof of Conjecture~\ref{well-generated-conjecture}.

\bibliography{Hurwitz}{}

\begin{thebibliography}{BGRW15}

\bibitem[Arm09]{Armstrong}
Drew Armstrong.
\newblock Generalized noncrossing partitions and combinatorics of {C}oxeter
  groups.
\newblock {\em Mem. Amer. Math. Soc.}, 202(949):x+159, 2009.

\bibitem[BB05]{BjornerBrenti}
Anders Bj{\"o}rner and Francesco Brenti.
\newblock {\em Combinatorics of {C}oxeter groups}, volume 231 of {\em Graduate
  Texts in Mathematics}.
\newblock Springer, New York, 2005.

\bibitem[BDSW14]{BDSW}
Barbara Baumeister, Matthew Dyer, Christian Stump, and Patrick Wegener.
\newblock A note on the transitive {H}urwitz action on decompositions of
  parabolic {C}oxeter elements.
\newblock {\em Proc. Amer. Math. Soc. Ser. B}, 1:149--154, 2014.

\bibitem[Bes03]{Bessis}
David Bessis.
\newblock The dual braid monoid.
\newblock {\em Ann. Sci. \'Ecole Norm. Sup. (4)}, 36(5):647--683, 2003.

\bibitem[Bes15]{BessisKpi1}
David Bessis.
\newblock Finite complex reflection arrangements are {$K(\pi,1)$}.
\newblock {\em Ann. of Math. (2)}, 181(3):809--904, 2015.

\bibitem[BGRW15]{BGRW}
Barbara Baumeister, Thomas Gobet, Kieran Roberts, and Patrick Wegener.
\newblock On the {H}urwitz action in finite {C}oxeter groups, 2015.
\newblock \texttt{arXiv:1512.04764} (math.GR).

\bibitem[Bra01]{Brady}
Thomas Brady.
\newblock A partial order on the symmetric group and new {$K(\pi,1)$}'s for the
  braid groups.
\newblock {\em Adv. Math.}, 161(1):20--40, 2001.

\bibitem[BW02]{BradyWattKPi1}
Thomas Brady and Colum Watt.
\newblock {$K(\pi,1)$}'s for {A}rtin groups of finite type.
\newblock In {\em Proceedings of the {C}onference on {G}eometric and
  {C}ombinatorial {G}roup {T}heory, {P}art {I} ({H}aifa, 2000)}, volume~94,
  pages 225--250, 2002.

\bibitem[Car72]{Carter}
R.~W. Carter.
\newblock Conjugacy classes in the {W}eyl group.
\newblock {\em Compositio Math.}, 25:1--59, 1972.

\bibitem[DCP08]{DeConciniProcesi}
C.~De~Concini and C.~Procesi.
\newblock The zonotope of a root system.
\newblock {\em Transform. Groups}, 13(3-4):507--526, 2008.

\bibitem[DPR13]{DouglassPfeifferRohrle}
J.~Matthew Douglass, G{\"o}tz Pfeiffer, and Gerhard R{\"o}hrle.
\newblock On reflection subgroups of finite {C}oxeter groups.
\newblock {\em Comm. Algebra}, 41(7):2574--2592, 2013.

\bibitem[Fie05]{Fiedler}
Miroslav Fiedler.
\newblock Matrices and graphs in {E}uclidean geometry.
\newblock {\em Electron. J. Linear Algebra}, 14:51--58, 2005.

\bibitem[Hum90]{Humphreys}
James~E. Humphreys.
\newblock {\em Reflection groups and {C}oxeter groups}, volume~29 of {\em
  Cambridge Studies in Advanced Mathematics}.
\newblock Cambridge University Press, Cambridge, 1990.

\bibitem[LRS14]{LRS}
J.~B. Lewis, V.~Reiner, and D.~Stanton.
\newblock Reflection factorizations of {S}inger cycles.
\newblock {\em J. Algebraic Combin.}, 40(3):663--691, 2014.

\bibitem[LZ04]{LandoZvonkin}
Sergei~K. Lando and Alexander~K. Zvonkin.
\newblock {\em Graphs on surfaces and their applications}, volume 141 of {\em
  Encyclopaedia of Mathematical Sciences}.
\newblock Springer-Verlag, Berlin, 2004.

\bibitem[Rip10]{Ripoll}
Vivien Ripoll.
\newblock Orbites d'{H}urwitz des factorisations primitives d'un \'el\'ement de
  {C}oxeter.
\newblock {\em J. Algebra}, 323(5):1432--1453, 2010.

\bibitem[Ste07]{Stembridge}
John~R. Stembridge.
\newblock Irreducible circuits and {C}oxeter arrangements.
\newblock {\em J. Combin. Theory Ser. A}, 114(7):1220--1237, 2007.

\bibitem[Zas82]{Zaslavsky}
Thomas Zaslavsky.
\newblock Signed graphs.
\newblock {\em Discrete Appl. Math.}, 4(1):47--74, 1982.

\end{thebibliography}
\bibliographystyle{alpha}

\end{document}